\documentclass[]{siamart220329}
\usepackage{amsmath}
\usepackage{amssymb}
\usepackage{graphicx}

\usepackage{tikz}
\usetikzlibrary{external}
\usepackage{pgfplots}
\usepackage[ComplainAboutUnexternalized]{externaltikz}
\usepackage{calc}

\usepackage{hyperref}

\newtheorem{remark}{Remark}

\begin{document}

\title{Error Estimates for Hyperbolic Scaling Limits of Linear Kinetic Models on Networks}

\author{Axel Klar\footnotemark[1] 
 \and Yizhou Zhou\footnotemark[2] }
\footnotetext[1]{RPTU  Kaiserslautern, Department of Mathematics, 67663 Kaiserslautern, Germany 
  (klar@rptu.de)}
\footnotetext[2]{Corresponding author, IGPM, RWTH Aachen University, D-52062 Aachen, Germany (zhou@igpm.rwth-aachen.de)}
 
\date{}


\maketitle

\begin{abstract}
This paper studies linear discrete kinetic models on networks and their asymptotic behavior in the small Knudsen number limit. For coupling conditions at an $n$-edge junction under a symmetric formulation, we introduce a change of variables that reformulates the system into $n$ independent initial–boundary value problems. The asymptotic expansions are then constructed and rigorously justified by deriving an error estimate based on the energy method.
\end{abstract}

{\bf Keywords.} 
Kinetic layer, Coupling condition, Kinetic half-space problem, Networks

{\bf AMS Classification.}  
82B40, 90B10, 35L50


\section{Introduction}

Mathematical models for flows on networks arise in a variety of practical applications, including traffic flow, gas transport in pipelines, supply chain dynamics, and blood circulation, see. e.g. \cite{BCGH} and the references therein. Such systems are typically described by partial differential equations defined on networks, where edges represent one-dimensional flow channels and nodes correspond to junctions. In the modeling, the formulation of coupling conditions at the junctions plays a key role for capturing the effective dynamics on complex networked structures.

In a macroscopic framework, coupling conditions have been extensively investigated for various classes of partial differential equations, including drift–diffusion, scalar hyperbolic, and hyperbolic systems such as the wave and Euler-type equations (see, e.g., \cite{BGKS,CC-17,BNR,BHK-06,BHK-06-Euler,CHS-08,ALM-10,EK-18,VZ-09,BCG-10,HKP-07,CGP-05,Gara-10}). On the other hand, on a mesoscopic scale, coupling conditions for kinetic equations on networks have been addressed in a much smaller number of studies (see \cite{FT-15,HM-09,BKKP}).  General and accurate procedures for linear kinetic equations were subsequently developed in \cite{BK-18-SISC,ABEK-24}, which are motivated by the classical derivation of kinetic slip boundary conditions for macroscopic equations (see \cite{BSS-84,BLP-79,Golse-92,Golse-08,UYY}).
Particularly, coupling conditions for macroscopic equations on networks were derived in our previous works \cite{BDKZ1,BDKZ2} from underlying discrete kinetic models via an asymptotic analysis of the behavior near the network nodes. The main goal of the present study is to rigorously justify the validity of such derivations.

Since kinetic equations with discrete velocity settings can be formulated as first-order hyperbolic relaxation systems, our approach is motivated by the theory of boundary conditions for relaxation systems \cite{Yong-99-IUMJ,ZY-22,ZY-chara}, while adapting it to account for the specific features of network junctions. Specifically, we focus on two linear kinetic models distinguished by their collision operators. These two models exhibit different boundary-layer behavior including kinetic and viscous layers and were previously investigated in \cite{BDKZ1} and \cite{BDKZ2}, respectively. We consider the coupling conditions at an $n$-edge junction with a symmetric formulation.
By introducing an appropriate change of variables, the problem with coupling conditions can be rewritten as $n$ independent initial–boundary value problems (IBVPs). Then we employ energy estimates to rigorously bound the error between the exact solution and its asymptotic approximation.  

The paper is organized as follows. Section \ref{section2} introduces the kinetic equations with two types of collisions and the corresponding coupling conditions for network problems. Sections \ref{section3} and \ref{section4} focus on the discrete velocity formulation of the kinetic equations and the coupling conditions respectively. In particular, we perform a change of variables and derive $n$-independent IBVPs. Section \ref{section5} presents the formal asymptotic expansions of these IBVPs. Finally, Section \ref{Section6} provides a justification of the asymptotic approximations through an error estimate.

\section{Kinetic model}\label{section2}
The one-dimensional linear kinetic model reads as 
\begin{align}\label{kinetic-eq}
    f_t + v f_x = \frac{1}{\epsilon} Q(f).
\end{align}
Here $f = f(x, v, t)$ is the distribution function of the spatial variable $x\in\mathbb{R}$, time variable $t\in [0,T]$ and micro velocity $v\in \mathbb{R}$. 
In this work, we consider two linearized collision operators:
$$
Q_1(f) = -\frac{1}{\epsilon} \left(f - (\rho+vq) M(v)\right)
$$
and 
$$
Q_2(f) = -\frac{1}{\epsilon} \left(f - \Big(\rho+vq+\frac{1}{2}(v^2-1)(S-\rho)\Big) M(v)\right)
$$
with the Maxwellian
$$
M(v) = \frac{1}{\sqrt{2\pi}}\exp\left(-\frac{v^2}{2}\right).
$$
Here the density, mean flux and total energy are given by
$$
\rho(t,x)=\int_{\mathbb{R}}f(t,x,v)dv,\quad 
q(t,x)=\int_{\mathbb{R}}vf(t,x,v)dv,\quad 
S(t,x)=\int_{\mathbb{R}}v^2(t,x,v)dv.
$$

We consider the $n$-edges coupling problem at the node, see Figure \ref{fig:node}., where all edges are oriented away from the node.
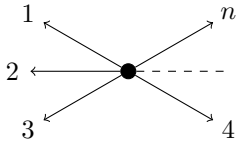
\begin{figure}[h]
    \centering
    \begin{tikzpicture}[scale=1.3]
    \node[circle,draw,fill=black,inner sep=2pt] (C) at (0,0) {};

    \draw [->] (C) -- ++(150:1) node[midway,above left] {} node[pos=1.2] {1};
    \draw[->]  (C) -- ++(180:1) node[pos=1.2] {2};
    \draw[->]  (C) -- ++(210:1) node[pos=1.2] {3};
    \draw [->] (C) -- ++(330:1) node[pos=1.2] {4};

    \draw[dashed] (C) -- ++(0:1);

    \draw [->] (C) -- ++(30:1) node[pos=1.2] {$n$};
\end{tikzpicture}
    \caption{Node connecting $n$ edges.}
    \label{fig:node}
\end{figure}
Denote $f^{(i)}=f^{(i)}(t,x,v)$ the distribution function satisfying \eqref{kinetic-eq} in the half plane $\{x>0\}$ on the $i$-th edge with $i=1,2,...,n$. We consider the symmetric coupling condition at the junction reading  as 
$$
f^{(i)}(t,0,v) = \frac{1}{n-1}\sum_{k=1,k\neq i}^n f^{(k)}(t,0,-v),\qquad v>0,\quad i=1,...,n.
$$
For simplicity, we also write $f^{(i)}(v)=f^{(i)}(t,0,v)$ in the derivation of coupling condition. 
From the coupling condition, it is not difficult to see that \cite{BDKZ1}
\begin{align}\label{bc2}
\sum_{i=1}^n f^{(i)}(v) = \sum_{i=1}^n f^{(i)}(-v).
\end{align}
Moreover, we have the following $(n-1)$ relations
\begin{align}
(n-1)f^{(1)}(v)+f^{(1)}(-v) =&~ (n-1)f^{(2)}(v)+f^{(2)}(-v) \nonumber \\[1mm]
=&~\cdots = (n-1)f^{(n)}(v)+f^{(n)}(-v).\label{bc1}
\end{align}

\section{Moment systems}\label{section3}
This section introduces the discrete velocity framework for two kinetic equations characterized with distinct collision terms. Both the discrete velocity models and the corresponding equivalent moment systems are presented in the following two subsections.

\subsection{Moment system with first type  of collision term}\label{section3.1}
We introduce the new variable $\widetilde{f}=f/M(v)$ and write the kinetic equation \eqref{kinetic-eq} with $Q(f)=Q_1(f)$ as
\begin{align}\label{kinetic-eq1}
    \widetilde{f}_t + v \widetilde{f}_x  = -\frac{1}{\epsilon} (\widetilde{f} - \widetilde{Q}_1 ),\qquad \widetilde{Q}_1=\rho+vq.
\end{align}
Here the macro variables are given by
$$
\rho=\int_{\mathbb{R}}M(v)\widetilde{f}(v)dv,\quad 
q=\int_{\mathbb{R}}vM(v)\widetilde{f}(v)dv
$$
Consider the discrete velocity method with $\widetilde{f}_k=\widetilde{f}(v_k)$. The associated equations are
\begin{equation}\label{eq:DVM}
\partial_t \widetilde{f}_k + v_k \partial_x \widetilde{f}_k = -\frac{1}{\epsilon}(\widetilde{f}_k-M_k),
\end{equation}
where 
\begin{equation}\label{def:moments}
M_k = \phi_0g_0+\phi_1(v_k)g_1,\quad g_i=\sum_{k=1}^{2N}\phi_i(v_k)w_k\widetilde{f}_k.
\end{equation}
Here, $\phi_i(v)$ is the orthogonal Hermite polynomial with the Maxwellian weight function. Namely,
\begin{equation}\label{hermitian}
\int_{\mathbb{R}}\phi_i(v)\phi_j(v)M(v)dv = \delta_{ij}.    
\end{equation}
Clearly, we have $\phi_0(v)=1$ and $\phi_1(v)=v$. The other $\phi_k(v)$ can be determined by the recurrence relation $v\phi_k(v) = \sqrt{k} \phi_{k-1} + \sqrt{k+1} \phi_{k+1}$.
For example, we compute $\phi_2(v)=(v^2-1)/\sqrt{2}$.
In this work, we take $v_k$ and $w_k$ as Hermitian quadrature nodes and weights. Namely, we have
$$
v_1=-z_1,~v_2=-z_2,~...,~v_{N}=-z_N,~v_{N+1}=z_1,~v_{N+2}=z_2,~...,~v_{2N}=z_N.
$$
Here $z_i$ are positive solutions of $\phi_{2N}(z)=0$. 




Now we multiply $w_k\phi_m(v_k)$ on the left side of \eqref{eq:DVM} and sum up $k$ to obtain:
\begin{equation*}
\partial_t g_m + \sqrt{m}\partial_x g_{m-1} + \sqrt{m+1}\partial_x g_{m+1} = -\frac{1}{\epsilon}\left(g_m- \sum_{k=1}^{2N}w_k\phi_m(v_k)M_k\right).
\end{equation*}
Then we derive 
\begin{align*}
\partial_t g_0 +  \partial_x g_1
&= 0,  \\[2mm]
\partial_t g_1 +  \partial_x g_0 + \sqrt{2}\partial_x g_2
&= 0,  \\[1mm]
\partial_t g_m +  \sqrt{m}\partial_x g_{m-1} + \sqrt{m+1}\partial_x g_{m+1}
&= -\frac{1}{\epsilon} g_m,\qquad 2\leq m\leq 2N-2,\\[1mm]
\partial_t g_{2N-1} +  \sqrt{2N-1}\partial_x g_{2N-2} 
&= -\frac{1}{\epsilon} g_{2N-1}.
\end{align*}
Notice that in the last equation we use the property $\phi_{2N}(v_k)=0$ for any $k=1,2,...,2N$. Denote the moments by $G=(g_0,g_1,...,g_{2N-1})^T$. We can write
\begin{equation}
G_t+AG_x=\frac{1}{\epsilon}Q_1G
\end{equation}
with $Q_1=\text{diag}(0,0,-1,\dots,-1)$ and
\begin{align}\label{matrix A and Q}
A=\begin{pmatrix}
    0 & \alpha_1 & 0 & 0 & \cdots & 0 \\[1mm]
    \alpha_1 & 0 & \alpha_{2} & 0 & \cdots & 0 \\[1mm]
    0 & \alpha_{2} & 0 & \alpha_{3} &  &  \\
    \vdots & & \ddots & \ddots & \ddots &  \\
    0 & \cdots & 0 & \alpha_{2N-2} & 0 & \alpha_{2N-1} \\
    0 & \cdots & 0 & 0 & \alpha_{2N-1} & 0 \\
\end{pmatrix},\qquad \alpha_p=\sqrt{p}.
\end{align}

Now we discuss the relation between the moment variable $G$ and the original discrete velocity variable. Denote
\begin{equation}\label{DV-var}
F=
\begin{pmatrix}
    F_-\\[2mm]
    F_+
\end{pmatrix},\quad 
F_-=
\begin{pmatrix}
    \widetilde{f}(v_1)\\
    \vdots\\
    \widetilde{f}(v_N)
\end{pmatrix},\quad
F_+=
\begin{pmatrix}
    \widetilde{f}(v_{N+1})\\
    \vdots\\
    \widetilde{f}(v_{2N})
\end{pmatrix}.
\end{equation}
Due to the definition in \eqref{def:moments}, we have the relation
\begin{equation}\label{MM-var}
G=\begin{pmatrix}
    V_- & V_+
\end{pmatrix}
\begin{pmatrix}
    W & \\[1mm]
    & W 
\end{pmatrix}
\begin{pmatrix}
    F_- \\[1mm]
    F_+
\end{pmatrix}:=V 
\begin{pmatrix}
    WF_- \\[1mm]
    WF_+
\end{pmatrix}
\end{equation}
with $W=\text{diag}(w_1,w_2,...,w_N)$ and
$$
V_-=
\begin{pmatrix}
    \phi_0(v_1)& \cdots & \phi_0(v_N)  
    \\[1mm]
    \phi_1(v_1)& \cdots & \phi_1(v_N)\\[1mm]
    \vdots & \ddots & \vdots \\[1mm]
    \phi_{2N-1}(v_1)& \cdots & \phi_{2N-1}(v_N)
\end{pmatrix}
~~
V_+=\begin{pmatrix}
    \phi_0(v_{N+1}) & \cdots & \phi_0(v_{2N}) \\[1mm]
    \phi_1(v_{N+1}) & \cdots & \phi_1(v_{2N}) \\[1mm]
    \vdots & \ddots & \vdots \\[1mm]
     \phi_{2N-1}(v_{N+1}) & \cdots & \phi_{2N-1}(v_{2N})
\end{pmatrix}.
$$
Furthermore, we state 
\begin{lemma}[\cite{BDKZ1}]\label{S}
Let $\{\phi_k(v)~|~k=1,2,...\}$ be the orthogonal Hermitian polynomials on $ \mathbb{R}$ with the weight function $M(v)$. Let $-v_N,\cdots,-v_1,v_1,\cdots,v_N$ be roots of $\phi_{2N}(v)=0$. Then we have 
    $$
    V^{-1}=\begin{pmatrix}
        W & \\
          & W
    \end{pmatrix}V^T.
    $$
\end{lemma}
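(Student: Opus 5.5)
Since $V$ is square ($2N\times 2N$), it suffices to prove the one-sided identity
\[
V\begin{pmatrix} W & \\ & W\end{pmatrix}V^T = I_{2N}.
\]
A one-sided inverse of a square matrix is automatically two-sided, so this gives the statement. Writing $\widehat W=\mathrm{diag}(W,W)$, the $(i,j)$ entry of $V\widehat W V^T$ (indices $i,j=0,\dots,2N-1$) is
\[
\sum_{k=1}^{2N} w_k\,\phi_i(v_k)\phi_j(v_k).
\]
Here the weight attached to the node $v_{N+l}=z_l$ is the same as the one attached to $v_l=-z_l$, because $\widehat W$ repeats $W$ on both blocks. I will read the statement as follows: the $w_k$ are the Gauss–Hermite weights associated with the $2N$ roots of $\phi_{2N}$, and these weights are symmetric under $v\mapsto -v$. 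This symmetry is exactly why a single $W$ can serve both blocks.

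The key step is the exactness of Gauss quadrature. The $2N$-point Gauss rule with nodes at the zeros of $\phi_{2N}$ and positive weights $w_k$ satisfies
\[
\sum_{k=1}^{2N} w_k\,p(v_k)=\int_{\mathbb R}p(v)M(v)\,dv
\qquad\text{for all polynomials } p \text{ with } \deg p\le 4N-1.
\]
For $0\le i,j\le 2N-1$ the product $\phi_i\phi_j$ has degree at most $4N-2\le 4N-1$. The orthonormality relation \eqref{hermitian} then gives
\[
\sum_{k} w_k\,\phi_i(v_k)\phi_j(v_k)=\delta_{ij}.
\]
Hence $V\widehat W V^T=I$, and therefore $V^{-1}=\widehat W V^T$.

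For completeness I would sketch why Gauss quadrature is exact up to degree $4N-1$. Given $p$ with $\deg p\le 4N-1$, divide by $\phi_{2N}$ to write $p=q\,\phi_{2N}+r$ with $\deg q,\deg r\le 2N-1$. The term $q\,\phi_{2N}$ integrates to zero against $M$ by orthogonality, and its quadrature sum is also zero because $\phi_{2N}(v_k)=0$ at every node. The remainder $r$ is integrated exactly once the weights are defined by $w_k=\int \ell_k M\,dv$, where $\ell_k$ are the Lagrange basis polynomials on the $2N$ distinct nodes; this is the standard definition of the Hermite quadrature weights. The symmetry $w_l=w_{N+l}$ then follows from $M(-v)=M(v)$, since the reflected node set is the same set, so the Lagrange basis polynomials pair up under $v\mapsto-v$.

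The main obstacle is making sure the weights $w_k$ in the statement really are the Gauss weights for the full $2N$-point rule, and in particular that they are symmetric so that the block form $\mathrm{diag}(W,W)$ is legitimate. Once this is settled, the remaining content is the degree count $4N-2\le 4N-1$, together with the remark that a square matrix with a one-sided inverse is invertible.
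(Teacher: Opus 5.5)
Your proof is correct. The $2N$-point Gauss--Hermite rule is exact for $\phi_i\phi_j$ (degree at most $4N-2$), so orthonormality gives $V\,\mathrm{diag}(W,W)\,V^T=I$. Your observation that $w_{N+l}=w_l$, which follows from $M(-v)=M(v)$, is exactly what justifies the block form $\mathrm{diag}(W,W)$.

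The paper gives no proof of this lemma and simply quotes it from its earlier work. Your Gauss-quadrature argument is the standard proof of this identity, so there is nothing to add.
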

\noindent Based on this lemma, \eqref{MM-var} becomes 
\begin{equation}\label{trans-moment-DVM}
F=V^TG.
\end{equation}

\subsection{Moment system with second type of collision term}\label{section3.2}
For the kinetic equation \eqref{kinetic-eq} with $Q(f)=Q_2(f)$, we introduce  again the variable $\widetilde{f}=f/M(v)$ and write
\begin{align}\label{kinetic-eq2}
    \widetilde{f}_t + v \widetilde{f}_x  = -\frac{1}{\epsilon} (\widetilde{f} - \widetilde{Q}_2 ),\qquad \widetilde{Q}_2=\rho+vq+\frac{1}{2}(v^2-1)(S-\rho)
\end{align}
with 
$$
\rho=\int_{\mathbb{R}}M(v)\widetilde{f}(v)dv,\quad 
q=\int_{\mathbb{R}}vM(v)\widetilde{f}(v)dv,\quad S=\int_{\mathbb{R}}v^2M(v)\widetilde{f}(v)dv.
$$
Consider the discrete velocity method with $\widetilde{f}_k=\widetilde{f}(v_k)$. 
Note that 
$$
S-\rho = \int_{\mathbb{R}}(v^2-1)M(v)\widetilde{f}(v)dv = \sqrt{2}\int_{\mathbb{R}} \phi_2(v)M(v)\widetilde{f}(v)dv.
$$
The equations for $\widetilde{f}_k$ are then given by
\begin{equation}\label{eq:DVM2}
\partial_t \widetilde{f}_k + v_k \partial_x \widetilde{f}_k = -\frac{1}{\epsilon}\left(\widetilde{f}_k-(\phi_0g_0+\phi_1(v_k)g_1+\phi_2(v_k)g_2)\right).
\end{equation}
By a similar derivation as the previous subsection, we obtain the system 
\begin{equation}
G_t+AG_x=\frac{1}{\epsilon}Q_2G
\end{equation}
with $A$ defined in \eqref{matrix A and Q} and $Q_2=\text{diag}(0,0,0,-1,\dots,-1)$. Furthermore, the relation \eqref{trans-moment-DVM} holds in this case. Compared with the previous system, the only difference between the two moment equations is the collision matrix.

\section{Coupling conditions for moment equations}\label{section4}
In this section, we derive the discrete velocity formulation corresponding to the coupling conditions \eqref{bc2} and \eqref{bc1}. Furthermore, we exploit the specific structure of these coupling conditions to reformulate the problem as a set of $n$-independent initial–boundary value problems (IBVPs).

We use the superscript $i$ to express the variable at each edge $1\leq i\leq n$. By using the notations in \eqref{DV-var} and \eqref{MM-var}, the discrete velocity variables and moments at $i$-th edge are denoted by $F^{(i)}$ and $G^{(i)}$ respectively.

Motivated by the coupling conditions \eqref{bc2} and \eqref{bc1}, we introduce new variables 
\begin{align}\label{defUk}
    U^{(1)} = \sum_{i=1}^n G^{(i)},\qquad U^{(k)} = G^{(k)}-G^{(1)},\quad k=2,3,...,n.
\end{align}
Denoting by $I_N$ the N-dimensional identity matrix,  the coupling conditions for $U^{(1)}$ and $U^{(k)}$ read
\begin{align}
&B_1U^{(1)}(0,t) = 0,\qquad   B_1=\begin{pmatrix}
        I_N & -I_N
    \end{pmatrix}V^T, \label{BC1}\\[2mm]
&B_2U^{(k)}(0,t) = 0,\qquad   B_2=\begin{pmatrix}
        I_N & (n-1)I_N
    \end{pmatrix}V^T,\quad k=2,3,...,n.\label{BC2}
\end{align}
Due to the linearity of the equation, we know that 
\begin{equation}\label{eq:eq}
U^{(k)}_t+AU^{(k)}_x=\frac{1}{\epsilon}QU^{(k)},\qquad x>0,\quad k=1,2,...,n
\end{equation}
with $Q=Q_1$ or $Q=Q_2$. 
Clearly, we observe that the systems for $U^{(k)}$ are decoupled and the problems for $U^{(k)}$ with $k\geq 2$ are the same.
In this way, we formulate four initial-boundary value problems (IBVPs), distinguished by their source terms or boundary conditions, see Table \ref{table1}. 
\begin{table}[h!]
\centering
\begin{tabular}{|c|c|c|}
\hline
 & collision term $Q_1$ & collision term $Q_2$ \\[0.6mm] \hline
boundary matrix $B_1$ & IBVP I with $Q_1$ & IBVP I with $Q_2$ \\[0.6mm] \hline
boundary matrix $B_2$ & IBVP II with $Q_1$ & IBVP II with $Q_2$ \\[0.6mm] \hline
\end{tabular}
\caption{Four IBVPs}
\label{table1}
\end{table}

Next we discuss the properties of the boundary matrices $B_1$ and $B_2$. 
\begin{lemma}\label{lemma2.2}
For any $n\geq 2$, the boundary matrix $B_2$ satisfies the following dissipative condition \cite{BenzoniSerre}:
$$
    y^TAy\leq 0,\qquad \text{for any}~~ y\in \text{ker} (B_2).
$$
Particularly, for $n\geq 3$ the matrix $B_2$ satisfies the strictly dissipative condition. Namely, the strict inequality $y^TAy<0$ holds.
\end{lemma}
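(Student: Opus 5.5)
The plan is to move from moment variables back to discrete velocity variables, where both the quadratic form $y^TAy$ and the constraint $B_2y=0$ become diagonal. The tools are Lemma \ref{S} and the relation \eqref{trans-moment-DVM}.

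\emph{Step 1: diagonalize $A$.} I will show that
$$
A = V\begin{pmatrix} W & \\ & W\end{pmatrix}\Lambda V^T,\qquad \Lambda=\text{diag}(v_1,\dots,v_{2N}).
$$
This follows from how the moment system was derived. Multiplying \eqref{eq:DVM} by $w_k\phi_m(v_k)$ and summing over $k$ turns the flux term $\Lambda F_x$ into $V\,\text{diag}(W,W)\Lambda F_x$. By \eqref{trans-moment-DVM} we have $F=V^TG$, so this flux equals $V\,\text{diag}(W,W)\Lambda V^T G_x$. Comparing with $AG_x$, which holds for arbitrary $G$, gives the formula above.

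Equivalently, one can check entrywise that $\sum_k w_k v_k\phi_i(v_k)\phi_j(v_k)=A_{ij}$. This uses the three-term recurrence together with the exactness of the $2N$-point Gauss--Hermite quadrature for polynomials of degree up to $4N-1$. The only delicate entry is the one where $\phi_{2N}$ would appear, and it vanishes at the nodes. This identification is the main step to get right, although it is essentially already contained in the derivation in Section \ref{section3}.

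\emph{Step 2: write the quadratic form in velocity variables.} Set $F=V^Ty$ and split it as $(F_-,F_+)$ according to \eqref{DV-var}. Recall $v_j=-z_j$ and $v_{N+j}=z_j$ for $j=1,\dots,N$. Then
$$
y^TAy = F^T\begin{pmatrix} W & \\ & W\end{pmatrix}\Lambda F=\sum_{j=1}^N w_jz_j\big(F_{+,j}^2-F_{-,j}^2\big).
$$
The condition $y\in\ker(B_2)$ reads $F_-+(n-1)F_+=0$, that is, $F_-=-(n-1)F_+$. Substituting this gives
$$
y^TAy=\big(1-(n-1)^2\big)\sum_{j=1}^N w_jz_jF_{+,j}^2 .
$$

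\emph{Step 3: conclude.} The Gauss--Hermite weights satisfy $w_j>0$ and the nodes satisfy $z_j>0$. Since $n\ge2$, we have $1-(n-1)^2\le 0$, so $y^TAy\le0$.

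For $n\ge3$ the coefficient $1-(n-1)^2$ is strictly negative. Moreover, for $y\neq0$ the vector $F=V^Ty$ is nonzero because $V$ is invertible by Lemma \ref{S}. If $F_+$ were zero, the constraint would force $F_-=0$ as well, so $F=0$, a contradiction. Hence $F_+\neq0$ and $y^TAy<0$, which is the strict dissipativity claimed in the lemma.
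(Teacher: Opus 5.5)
Your proof is correct and is essentially the paper's argument: both use Lemma \ref{S} and the diagonalization $V^{-1}AV=\text{diag}(-\Lambda,\Lambda)$, your description of $\ker(B_2)$ by $F_+$ corresponds to the paper's parametrization by $x\in\mathbb{R}^N$ via $x=-WF_+$, and both arrive at the same factor $1-(n-1)^2=-(n^2-2n)$. The one addition on your side is that you justify the diagonalization identity (three-term recurrence plus exactness of the Gauss--Hermite quadrature), which the paper simply asserts.
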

\begin{proof}
We express the kernel of $B_2$ as
    $$
    y=V\begin{pmatrix}
        (n-1)I_N\\[1mm]
        -I_N
    \end{pmatrix}x,\quad x\in\mathbb{R}^N \setminus\{0\} .
    $$
    Then we compute 
    $$
    y^TAy = x^T\begin{pmatrix}
        (n-1)I_N & -I_N
    \end{pmatrix}V^TAV\begin{pmatrix}
        (n-1)I_N\\[1mm]
        -I_N
    \end{pmatrix}x
    $$
Using Lemma \ref{S}, we have
$$
V^TAV=\begin{pmatrix}
        W^{-1} & \\
          & W^{-1}
    \end{pmatrix}(V^{-1}AV)=\begin{pmatrix}
        W^{-1} & \\
          & W^{-1}
    \end{pmatrix}\begin{pmatrix}
        -\Lambda & \\
          & \Lambda
    \end{pmatrix},
$$
where $\Lambda=\text{diag}(z_1,z_2,...,z_N)$.
Thus we obtain
    $$
    y^TAy = -(n^2-2n)x^TW^{-1}\Lambda x .
    $$ 
Recall that $W$ and $\Lambda$ are diagonal matrices with positive entrances. Consequently, we find that $y^TAy\leq 0$ for any $n\geq 2$ and $y^TAy<0$ for any $n\geq 3$.
\end{proof}
On the other hand, for the IBVP I, we have
\begin{lemma}\label{lemma2.3}
    For any $n$, the boundary matrix $B_1$ satisfies the following relation:
$$
    y^TAy=0,\qquad \text{for any}~~ y\in \text{ker} (B_1).
$$
\end{lemma}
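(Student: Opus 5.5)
The plan is to copy the argument of Lemma \ref{lemma2.2}, with $B_2$ replaced by $B_1$. The first step is to describe $\ker(B_1)$ explicitly. Since $V$ is invertible by Lemma \ref{S}, we can write any $y\in\mathbb{R}^{2N}$ as $y=Vz$ with $z=(z_-,z_+)^T$, $z_\pm\in\mathbb{R}^N$. Then
$$B_1y=\begin{pmatrix} I_N & -I_N\end{pmatrix}V^TVz .$$
Lemma \ref{S} gives $V^TV=\mathrm{diag}(W^{-1},W^{-1})$, so $B_1y=W^{-1}z_--W^{-1}z_+$. Hence $B_1y=0$ exactly when $z_-=z_+$. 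The kernel is therefore
$$y=V\begin{pmatrix} I_N\\ I_N\end{pmatrix}x,\qquad x\in\mathbb{R}^N .$$
Equivalently, one can read this off from the parametrization used for $\ker(B_2)$: the vector $\big((n-1)I_N,\,-I_N\big)^T$ there is replaced by $(I_N,\,I_N)^T$.

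The second step is to evaluate the quadratic form. As in the proof of Lemma \ref{lemma2.2},
$$y^TAy=x^T\begin{pmatrix} I_N & I_N\end{pmatrix}V^TAV\begin{pmatrix} I_N\\ I_N\end{pmatrix}x .$$
We use the identity established there, $V^TAV=\mathrm{diag}(W^{-1},W^{-1})\,\mathrm{diag}(-\Lambda,\Lambda)$. This yields
$$y^TAy=x^T\left(-W^{-1}\Lambda+W^{-1}\Lambda\right)x=0 .$$
The result holds for every $x$, and $n$ never enters, so the claim holds for any $n$.

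The only point that needs care is the description of $\ker(B_1)$, which rests on $V^TV$ being block-diagonal with $W^{-1}$ blocks. That fact follows directly from Lemma \ref{S}: there $V^{-1}=\mathrm{diag}(W,W)V^T$, so $\mathrm{diag}(W,W)V^TV=I$. The ordering of the velocities also matters. The first $N$ columns of $V$ correspond to the negative velocities $v_1,\dots,v_N$ and carry the eigenvalues $-\Lambda$; the last $N$ columns correspond to the positive velocities and carry $+\Lambda$. With this ordering the two contributions cancel exactly.

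As a physical check, $\ker(B_1)$ corresponds to the specular reflection condition $F_-=F_+$ at $x=0$. Under that condition the incoming and outgoing fluxes cancel, which is consistent with $y^TAy=0$.
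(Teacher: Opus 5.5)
Your proof is correct and is essentially the paper's argument. You parametrize $\ker(B_1)$ as $y=V\begin{pmatrix} I_N\\ I_N\end{pmatrix}x$ and use $V^TAV=\mathrm{diag}(W^{-1},W^{-1})\,\mathrm{diag}(-\Lambda,\Lambda)$, so the two diagonal blocks cancel; your derivation of the kernel from $V^TV=\mathrm{diag}(W^{-1},W^{-1})$ (a direct consequence of Lemma~\ref{S}) usefully fills in a step the paper only asserts.
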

\begin{proof}
We express the kernel of $B_1$ as
    $$
    y=V\begin{pmatrix}
        I_N\\[1mm]
        I_N
    \end{pmatrix}x,\quad x\in\mathbb{R}^N \setminus\{0\} .
    $$
    Then we compute 
    $$
    y^TAy = x^T\begin{pmatrix}
         I_N & I_N
    \end{pmatrix}V^TAV\begin{pmatrix}
        I_N\\[1mm]
        I_N
    \end{pmatrix}x
    $$
Similar to the previous lemma, we have
$$
V^TAV=\begin{pmatrix}
        W^{-1} & \\
          & W^{-1}
    \end{pmatrix}(V^{-1}AV)=\begin{pmatrix}
        W^{-1} & \\
          & W^{-1}
    \end{pmatrix}\begin{pmatrix}
        -\Lambda & \\
          & \Lambda
    \end{pmatrix},
$$
and therefore $y^TAy = 0$.

\end{proof}

\section{Asymptotic expansion}\label{section5}

In this section, we construct the asymptotic solutions corresponding to the four cases listed in Table \ref{table1}. It is worth noting that the leading-order asymptotic expansions have already been derived in \cite{BDKZ1,BDKZ2}. For the purpose of establishing error estimates in the next section, we extend these results by including higher-order terms so that the residuals become sufficiently small.

\subsection{First-type collision term $Q_1$: IBVP I}\label{subsection21}
In this subsection, we denote $U=U^{(1)}$ and consider the IBVP
\begin{align}\label{IBVP1}
\left\{\begin{array}{l}
U_t+AU_x=Q_1U/\epsilon,\\[2mm]
B_1U(0,t)=0.
\end{array}\right.
\end{align}
Moreover, we use the following block-matrix notation in this subsection: 
\begin{align}
A&=\begin{pmatrix}
        A_{11} & A_{12}\\[1mm]
        A_{12}^T & A_{22}
    \end{pmatrix}\quad \text{with}\quad A_{11} = \begin{pmatrix}
        0 & \alpha_1 \\ 
        \alpha_1 & 0 
    \end{pmatrix}\quad
A_{12} = \begin{pmatrix}
        0 & 0 & \cdots\\ 
        \alpha_2 & 0 & \cdots
    \end{pmatrix} \label{IBVP1-A11}\\[2mm]
A_{22} &= \begin{pmatrix}
   0 & \alpha_{3} &  &  \\
   \alpha_3~ & 0 & \ddots &  \\
    & \ddots & \ddots & \alpha_{2N-1} \\[1mm]
    &  & \alpha_{2N-1} & 0 
\end{pmatrix}.\label{IBVP1-A22}
\end{align}

Consider the asymptotic solution
$$
U_\epsilon = \bar{U}_0(x,t) + \epsilon \bar{U}_1(x,t) = \begin{pmatrix}
    \bar{u}_0\\[1mm]
    \bar{v}_0
\end{pmatrix}(x,t)+\epsilon\begin{pmatrix}
    \bar{u}_1\\[1mm]
    \bar{v}_1
\end{pmatrix}(x,t).
$$
Here $\bar{u}_0$ and $\bar{u}_1$ represent the first two components of $\bar{U}_0$ and $\bar{U}_1$ while $\bar{v}_0$ and $\bar{v}_1$ are the last $(2N-2)$ components of $\bar{U}_0$ and $\bar{U}_1$.
In other words, for the first boundary value problem with collision term $Q_1$ we assume that there is no boundary layer.
Substituting the asymptotic expansion into the equation \eqref{IBVP1}, we match each order of $\epsilon$ and derive the equations 
\begin{align}
O(\epsilon^{-1}):\qquad& \bar{v}_{0}=0 \label{IBVP1-bar-v0}\\[2mm]
O(\epsilon^{0}):\qquad& \partial_t\bar{u}_{0}
+ A_{11} \partial_x \bar{u}_{0} = 0\label{IBVP1-bar-u0}\\[2mm]
O(\epsilon^{0}):\qquad& \bar{v}_{1}=-A_{12}^T\partial_x\bar{u}_0 \label{IBVP1-bar-v1}\\[2mm]
O(\epsilon^1):\qquad& \partial_t\bar{u}_{1}
+ A_{11} \partial_x \bar{u}_{1} = - A_{12} \partial_x \bar{v}_{1}.\label{IBVP1-bar-u1}
\end{align}
\begin{remark}	
At leading order, equation \eqref{IBVP1-bar-u0} corresponds to the wave equation studied in \cite{BDKZ1}:
recalling from Section \ref{section4}, that $U$ satisfies the same equation as the moment variable $G=(g_0,g_1,\dots,g_{2N-1})$
and using $\rho = \sqrt{2}g_0$ and $q = \sqrt{2}g_1$, the equation reads
\begin{align*}
\rho_t + q_x = 0,\qquad
q_t + \rho_x = 0.
\end{align*}
\end{remark}
The boundary condition in \eqref{IBVP1} implies 
$$
\begin{pmatrix}
    I_N & -I_N
\end{pmatrix}V^T
\begin{pmatrix}
    \bar{u}_0 \\[1mm]
    \bar{v}_0 
\end{pmatrix}(0,t)=0,\qquad
\begin{pmatrix}
    I_N & -I_N
\end{pmatrix}V^T
\begin{pmatrix}
    \bar{u}_1 \\[1mm]
    \bar{v}_1 
\end{pmatrix}(0,t)=0.
$$
Thanks to the property of Hermitian polynomials, we have
\begin{align*}
\begin{pmatrix}
    I_N & -I_N
\end{pmatrix}V^T = -2(0, \Phi_1, 0, \Phi_3,\dots,0,\Phi_{2N-1})   
\end{align*}
where $\Phi_k=(\phi_k(v_{N+1}),\phi_k(v_{N+2}),\dots,\phi_k(v_{2N}))^T$ for $0\leq k\leq 2N-1$. Then it follows that 
\begin{align*}
(0, \Phi_1)\bar{u}_0 = 0,\qquad 
(0, \Phi_1)\bar{u}_1 + (0, \Phi_3,\dots,0,\Phi_{2N-1})\bar{v}_1= 0.
\end{align*}
By \eqref{IBVP1-bar-v1}, we have $\bar{v}_1=-A_{12}^T\partial_x\bar{u}_0$. Notice that $(0, \Phi_3,\dots,0,\Phi_{2N-1})A_{12}^T=0$.
Then the above relations become
\begin{align}\label{reducedBC1}
(0, \Phi_1)\bar{u}_0 = 0,\qquad 
(0, \Phi_1)\bar{u}_1 = 0.
\end{align}
Consider the characteristic variables $\beta_{k+},\beta_{k-}$ such that 
$$
\bar{u}_k = P_+\beta_{k+}+P_-\beta_{k-},\qquad k=0,1
$$ 
where $P_+$ and $P_-$ are eigenvectors of $A_{11}$ associated with positive and negative eigenvalue respectively. By computation, we know that 
$$
P_+ = \frac{1}{\sqrt{2}}\begin{pmatrix}
    1\\
    1
\end{pmatrix},\qquad 
P_- = \frac{1}{\sqrt{2}}\begin{pmatrix}
    1\\
    -1
\end{pmatrix}.
$$
Therefore, the boundary condition in \eqref{reducedBC1} can be expressed as
$$
\Phi_1\beta_{k+} = \Phi_1\beta_{k-}
\quad \Leftrightarrow \quad \beta_{k+} = \beta_{k-}.
$$
These are the reduced boundary conditions for hyperbolic systems \eqref{IBVP1-bar-u0} and \eqref{IBVP1-bar-u1}. Note that no boundary layer correction term is needed in this case.

\begin{remark}
    Note that, for the leading order, the boundary condition $\beta_{0+}=\beta_{0-}$ means that the second component of $\bar{u}_0$ is equal to $0$. Recall the setting $U=U^{(1)}$ in this subsection and the definition of $U^{(1)}$ in \eqref{defUk}. Thus, this boundary condition implies the balance of fluxes $q$ for the limit problem, compare  \cite{BDKZ1}.
\end{remark}

\subsection{First-type collision term $Q_1$: IBVP II}

In this subsection, we denote $U=U^{(2)}$ and consider the IBVP
\begin{align}\label{IBVP2}
\left\{\begin{array}{l}
U_t+AU_x=Q_1U/\epsilon,\\[2mm]
B_2U(0,t)=0.
\end{array}\right.
\end{align}
Moreover, we use the same block-matrix notation as that in \eqref{IBVP1-A11} and \eqref{IBVP1-A22}. 
Consider the asymptotic solution
$$
U_\epsilon = \bar{U}_0(x,t)+\epsilon \bar{U}_1(x,t)+\widetilde{U}_0\left(y,t\right)+\epsilon \widetilde{U}_1\left(y,t\right),\qquad y=\frac{x}{\epsilon}.
$$
Here $\bar{U}_{0}$ and $\bar{U}_{1}$ are the outer solutions while $\widetilde{U}_0$ and $\widetilde{U}_1$ represent the kinetic boundary-layer correction terms. Thus, for the boundary value problem with collision term $Q_1$ we assume a kinetic boundary layer, but no viscous boundary layer. By the matching principle, we have
$$
\widetilde{U}_0(\infty,t)=\widetilde{U}_1(\infty,t)=0.
$$
The outer solutions 
$$
\bar{U}_0 = \begin{pmatrix}
    \bar{u}_0\\
    \bar{v}_0
\end{pmatrix}\qquad 
\bar{U}_1 = \begin{pmatrix}
    \bar{u}_1\\
    \bar{v}_1
\end{pmatrix}
$$ 
satisfy the same equations as those in \eqref{IBVP1-bar-v0}---\eqref{IBVP1-bar-u1}. We substitute the boundary-layer correction term $\widetilde{U}_0+\epsilon\widetilde{U}_1$ into the equation \eqref{IBVP2} and match each order of $\epsilon$ to obtain
\begin{align}
    O(\epsilon^{-1});\qquad &A\partial_y\widetilde{U}_0 = Q_1\widetilde{U}_0\label{IBVP2-U0tilde} \\[1mm] 
    O(\epsilon^{0}):\qquad &\partial_t\widetilde{U}_0 + A\partial_y\widetilde{U}_1 = Q_1\widetilde{U}_1  .\label{IBVP2-U1tilde}
\end{align}
For $k=0,1$, we denote $\widetilde{u}_k$ by the first two components of $\widetilde{U}_k$ and $\widetilde{v}_k$ the last $(2N-2)$ components of $\widetilde{U}_k$. 
By \eqref{IBVP2-U0tilde}, we have
\begin{align*}
    \begin{pmatrix}
        A_{11} & A_{12}\\[1mm]
        A_{12}^T & A_{22}
    \end{pmatrix}\partial_y
    \begin{pmatrix}
        \widetilde{u}_{0}\\[1mm]
        \widetilde{v}_{0}
    \end{pmatrix} = \begin{pmatrix}
        0\\[1mm]
        -\widetilde{v}_{0}
    \end{pmatrix},\quad 
    \begin{pmatrix}
        A_{11} & A_{12}\\[1mm]
        A_{12}^T & A_{22}
    \end{pmatrix}\partial_y
    \begin{pmatrix}
        \widetilde{u}_{1}\\[1mm]
        \widetilde{v}_{1}
    \end{pmatrix} = \begin{pmatrix}
        0\\[1mm]
        -\widetilde{v}_{1}
    \end{pmatrix} - \begin{pmatrix}
        \partial_t\widetilde{u}_{0}\\[1mm]
        \partial_t\widetilde{v}_{0}
    \end{pmatrix}.
\end{align*}
Since $A_{11}$ is invertible, we have 
\begin{align}
&\widetilde{u}_{0} = -A_{11}^{-1}A_{12}\widetilde{v}_{0}\label{IBVP2-aeq1}\\[2mm]
&(A_{22}-A_{12}^TA_{11}^{-1}A_{12})\partial_y\widetilde{v}_0 = -\widetilde{v}_0\label{IBVP2-ODE1}\\[1mm]
&\widetilde{u}_{1} = -A_{11}^{-1}A_{12}\widetilde{v}_{1} + \int_y^{\infty}A_{11}^{-1}\partial_t \widetilde{u}_0dy \label{IBVP2-aeq2}\\[1mm]
&(A_{22}-A_{12}^TA_{11}^{-1}A_{12})\partial_y\widetilde{v}_1 = -\widetilde{v}_1 + \widetilde{f}\label{IBVP2-ODE2}
\end{align}
with $\widetilde{f} = A_{12}^TA_{11}^{-1}\partial_t\widetilde{u}_0-\partial_t\widetilde{v}_0$.
By computation, we know that $A_{12}^TA_{11}^{-1}A_{12}=0$. 
Due to the classical theory of ordinary differential equations, the initial data $\widetilde{v}_{k}(0)$ for the bounded solutions of \eqref{IBVP2-ODE1} and \eqref{IBVP2-ODE2} should be given on the stable subspace:
$$
\widetilde{v}_{k}(0)= \widetilde{R}_{-} \gamma_{k-},\qquad k=0,1.
$$
Here $\widetilde{R}_{-}$ represents the eigenvectors of $-A_{22}^{-1}$ associated with negative eigenvalues. 

Substituting the asymptotic solution into the boundary condition \eqref{IBVP2} yields
\begin{align}\label{IBVP2-BC}
B_2\begin{pmatrix}
    \bar{u}_0+\widetilde{u}_0\\[1mm]
    \bar{v}_0+\widetilde{v}_0
\end{pmatrix}(0,t)=0,\qquad 
B_2\begin{pmatrix}
    \bar{u}_1+\widetilde{u}_1\\[1mm]
    \bar{v}_1+\widetilde{v}_1
\end{pmatrix}(0,t)=0.
\end{align}
For further derivation, we consider the characteristic variables
$\bar{u}_k = P_+\beta_{k+}+P_-\beta_{k-}$ with $k=0,1$. 
Using the outer equations and the equation \eqref{IBVP2-aeq1}, the first equation can be written as
\begin{align}\label{eq:BClinear}
B_2\begin{pmatrix}
        P_{+} & -A_{11}^{-1}A_{12}\widetilde{R}_{-}\\[1mm]
        0 & \widetilde{R}_{-}
    \end{pmatrix}
    \begin{pmatrix}
    \beta_{0+}\\[1mm]
    \gamma_{0-}
\end{pmatrix}=-B_2\begin{pmatrix}
    P_{-}\beta_{0-}\\[1mm]
    0
\end{pmatrix}.
\end{align}
For this algebraic equation, we have 
\begin{proposition}\label{prop1}
    Assume that the boundary matrix $B_2$ satisfies the strictly dissipative condition, then the coefficient matrix $$
    B_2\begin{pmatrix}
        R_{1+} & -A_{11}^{-1}A_{12}\widetilde{R}_{-}\\[1mm]
        0 & \widetilde{R}_{-}
    \end{pmatrix}
    $$ in the equation \eqref{eq:BClinear} is invertible.
\end{proposition}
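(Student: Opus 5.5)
We argue by contradiction through the energy identity, in the spirit of the classical argument for boundary conditions of relaxation systems (Yong). Here $R_{1+}$ denotes $P_+$, and $B_2$ is a $N\times 2N$ matrix of full rank $N$. Since $P_+$ is one column and $\widetilde R_-$ has as many columns as $-A_{22}^{-1}$ has negative eigenvalues, the matrix in question is square provided this count is $N-1$. I would first verify the count. $A$ is similar to $\mathrm{diag}(-\Lambda,\Lambda)$, so $A$ has exactly $N$ positive eigenvalues. $A_{11}$ has one positive and one negative eigenvalue. Because $A_{12}^TA_{11}^{-1}A_{12}=0$, the Schur complement of $A_{11}$ in $A$ equals $A_{22}$. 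By the Haynsworth inertia additivity, $A_{22}$ then has $N-1$ positive eigenvalues. Hence $-A_{22}^{-1}$ has $N-1$ negative eigenvalues, and the coefficient matrix is $N\times N$. It therefore suffices to show that its kernel is trivial.

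Suppose that
\[
B_2\begin{pmatrix} P_+\beta+(-A_{11}^{-1}A_{12}\widetilde R_-\gamma)\\ \widetilde R_-\gamma\end{pmatrix}=0 .
\]
Let $\widetilde v(y)$ be the decaying solution of \eqref{IBVP2-ODE1} with $\widetilde v(0)=\widetilde R_-\gamma$, and set $\widetilde u=-A_{11}^{-1}A_{12}\widetilde v$ and $\widetilde U=(\widetilde u,\widetilde v)^T$. Then $A\partial_y\widetilde U=Q_1\widetilde U$, and $\widetilde U$ decays exponentially. Define $Y=\widetilde U(0)+(P_+\beta,0)^T$, so that $Y\in\ker B_2$. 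By Lemma~\ref{lemma2.2} with strict dissipativity, $Y^TAY<0$ unless $Y=0$.

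We expand
\[
Y^TAY=\widetilde U(0)^TA\widetilde U(0)+2\beta\,(P_+,0)A\widetilde U(0)+\beta^2P_+^TA_{11}P_+ .
\]
The three terms are handled as follows.

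\begin{enumerate}
\item The first term is controlled by the energy identity along the layer:
\[
\frac{d}{dy}\bigl(\widetilde U^TA\widetilde U\bigr)=2\widetilde U^TQ_1\widetilde U=-2|\widetilde v|^2 ,
\]
and decay at infinity gives $\widetilde U(0)^TA\widetilde U(0)=2\int_0^\infty|\widetilde v|^2\,dy\ge 0$.
\item For the cross term, note that $(P_+,0)A\widetilde U=P_+^T(A_{11}\widetilde u+A_{12}\widetilde v)$. Since $A_{11}\widetilde u+A_{12}\widetilde v=0$ by \eqref{IBVP2-aeq1}, this term vanishes.
\item The last term equals $\beta^2\alpha_1\ge0$, because $P_+$ is the eigenvector of $A_{11}$ for the eigenvalue $\alpha_1=1$.
\end{enumerate}

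Consequently $Y^TAY\ge 0$, which together with strict dissipativity forces $Y=0$. All three nonnegative terms must then vanish, which gives $\beta=0$ and $\widetilde v\equiv0$, and hence $\gamma=0$ since $\widetilde R_-$ has independent columns.

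The main obstacle is the dimension count, namely showing that $\widetilde R_-$ has exactly $N-1$ columns, since this is what makes kernel-triviality equivalent to invertibility. The inertia argument via the vanishing of $A_{12}^TA_{11}^{-1}A_{12}$ is what I would rely on. If it proved delicate, I would fall back on a direct computation, using that $A_{22}$ is the tridiagonal Jacobi matrix of size $2N-2$ with zero diagonal. Its spectrum is symmetric with no zero eigenvalue, since the even size makes its determinant nonzero. It therefore has exactly $N-1$ negative eigenvalues, and so does $-A_{22}^{-1}$.
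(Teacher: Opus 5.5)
Your proof is correct, and it takes a different route from the paper. The paper gives no argument of its own. It defers to \cite{Yong-99-IUMJ}, where strict dissipativity of the boundary matrix implies the generalized Kreiss condition, and it reads off the solvability of \eqref{eq:BClinear} from that general theory.

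You argue directly with the stationary layer problem. From $(\beta,\gamma)$ in the kernel you build the decaying profile $\widetilde U$. You then split $Y^TAY$ into three parts: the layer energy $2\int_0^\infty|\widetilde v|^2\,dy$, obtained from $\frac{d}{dy}(\widetilde U^TA\widetilde U)=-2|\widetilde v|^2$; a cross term that vanishes by \eqref{IBVP2-aeq1}; and $\beta^2P_+^TA_{11}P_+=\beta^2$. Your inertia count, using $A_{12}^TA_{11}^{-1}A_{12}=0$, shows the matrix is square, which the paper leaves implicit.

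Your route gives a short, self-contained proof that is slightly stronger than the statement. Since $Y^TAY$ is a sum of nonnegative terms, the inequality $Y^TAY\le 0$ alone forces $\beta=0$ and $\widetilde v\equiv 0$. So non-strict dissipativity suffices, and the conclusion extends to $n=2$, where Lemma~\ref{lemma2.2} gives only $y^TAy=0$ on $\ker B_2$. The paper treats $n=2$ only for the $Q_2$ problem, and there by explicit computation.

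The citation buys generality instead. The generalized Kreiss condition is a structural criterion for arbitrary relaxation systems. It is the hypothesis behind the well-posedness and boundary-layer theory of \cite{Yong-99-IUMJ,ZY-22}, which the paper also uses for the $Q_2$ problem with $n\ge 3$.
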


The proof of this proposition can be founded in \cite{Yong-99-IUMJ} which is based on the theory of generalized Kreiss condition proposed therein. Thanks to this result, we have well-posed reduced boundary condition for the outer solution $\bar{u}_{0}$ and the initial condition for the equation \eqref{IBVP2-ODE1}.
Similarly, we can obtain the reduced boundary condition for $\bar{u}_{1}$ and the initial condition for the equation \eqref{IBVP2-ODE2}. Notice that in the derivation, $\widetilde{U}_0$ is regarded as given functions.

\begin{remark}
    Here we prove the solvability of the boundary condition \eqref{eq:BClinear}. In this subsection, we set $U=U^{(k)}$ with $U^{(k)}$ given in \eqref{defUk}. This boundary condition implies the  invariant for the limit problem in \cite{BDKZ1}. For further details on the numerical solutions of this boundary condition, we refer the interested reader to \cite{BDKZ1}.
\end{remark}

\subsection{Second-type collision term $Q_2$: IBVP I}\label{subsection4.3}
In this subsection, we denote $U=U^{(1)}$ and consider the IBVP
\begin{align}\label{IBVP3}
\left\{\begin{array}{l}
U_t+AU_x=Q_2U/\epsilon,\\[2mm]
B_1U(0,t)=0.
\end{array}\right.
\end{align}
Moreover, in this subsection we denote 
\begin{align}
    A&=\begin{pmatrix}
        A_{11} & A_{12}\\[1mm]
        A_{12}^T & A_{22}
    \end{pmatrix}\quad \text{with}\quad A_{11} = \begin{pmatrix}
        0 & \alpha_1 & 0\\ 
        \alpha_1 & 0 & \alpha_2\\ 
        0 & \alpha_2 & 0
    \end{pmatrix}\label{IBVP3-A11}\\[2mm]
A_{12} &= \begin{pmatrix}
        0 & 0 & \cdots\\ 
        0 & 0 & \cdots\\ 
        \alpha_3 & 0 & \cdots
    \end{pmatrix}\quad
A_{22} = \begin{pmatrix}
   0 & \alpha_{4} &  &  \\
   \alpha_4~ & 0 & \ddots &  \\
    & \ddots & \ddots & \alpha_{2N-1} \\[1mm]
    &  & \alpha_{2N-1} & 0 
\end{pmatrix}.\label{IBVP3-A22}
\end{align}
Consider the asymptotic solution with the viscous boundary-layer correction term
\begin{align}
U_\epsilon(x,t) = \bar{U}_0(x,t) + \epsilon \bar{U}_1(x,t) +  \sqrt{\epsilon} \widehat{U}_0(z,t) + \epsilon \widehat{U}_1(z,t),\qquad z=x/\sqrt{\epsilon}.
\end{align}
In contrast to the first boundary value problem with collision term $Q_1$ we expect  here a viscous layer, but still no kinetic layer. 
The matching principle requires
$$
\widehat{U}_0(\infty,t)=\widehat{U}_1(\infty,t)=0.
$$
Denote $\bar{u}_k$ by the first three components of $\bar{U}_k$ and $\bar{v}_k$ the last $(2N-3)$ components of $\bar{U}_k$. For the outer equation, we match the coefficients of $\epsilon$ in the equation 
$$
\partial_t\begin{pmatrix}
    \bar{u}_0+\epsilon\bar{u}_1\\[1mm]
    \bar{v}_0+\epsilon\bar{v}_1
\end{pmatrix}
+ \begin{pmatrix}
    A_{11} & A_{12}\\[1mm]
    A_{12}^T & A_{22}
\end{pmatrix}
\partial_x\begin{pmatrix}
    \bar{u}_0+\epsilon\bar{u}_1\\[1mm]
    \bar{v}_0+\epsilon\bar{v}_1
\end{pmatrix}=
\frac{1}{\epsilon}\begin{pmatrix}
    0\\[1mm]
    -\bar{v}_0-\epsilon\bar{v}_1
\end{pmatrix}
$$
to obtain:
\begin{align}
&O(\epsilon^{-1}):\qquad \bar{v}_{0} = 0\label{IBVP3-v0}\\[2mm]
&O(\epsilon^{0}):~\quad\quad \partial_t\bar{u}_0 + A_{11} \partial_x \bar{u}_0 = 0\label{IBVP3-u0}\\[2mm]
&O(\epsilon^{0}):~\quad\quad \bar{v}_{1} = -A_{12}^T \partial_x \bar{u}_0 \label{IBVP3-v1}\\[2mm]
&O(\epsilon^{1}):~\quad\quad \partial_t\bar{u}_1 + A_{11} \partial_x \bar{u}_1 = -A_{12}\partial_x \bar{v}_1.\label{IBVP3-u1}
\end{align}
For hyperbolic systems \eqref{IBVP3-u0} and \eqref{IBVP3-u1}, we introduce the characteristic variable $\beta_{k+}$, $\beta_{k0}$ and $\beta_{k-}$ with $k=0,1$ such that
$$
\bar{u}_k=P_{+}\beta_{k+} + P_{0}\beta_{k0} + P_{-}\beta_{k-}.
$$
Here $P_+$, $P_-$ and $P_0$ are eigenvectors of $A_{11}$ associated with positive, negative and zero eigenvalue. By computation, we have
\begin{equation}\label{IBVP3-R1+R10R1-}
P_{+}^T=\frac{1}{\sqrt{2}\lambda}
\begin{pmatrix}
\alpha_1,~\lambda,~ \alpha_2 
\end{pmatrix},
~~
P_{0}^T=\frac{1}{\lambda}\left(\alpha_2,0,-\alpha_1\right),
~~
P_{-}^T=\frac{1}{\sqrt{2}\lambda}
\begin{pmatrix}
\alpha_1,-\lambda,~ \alpha_2 
\end{pmatrix}
\end{equation}
with $\lambda=\sqrt{\alpha_1^2+\alpha_2^2}$ the positive eigenvalue of $A_{11}$. By the classical theory for hyperbolic equations, at the boundary point $x=0$, one boundary condition should be given for the variable $\beta_{k+}$.

\begin{remark}
	The outer equation \eqref{IBVP3-u0} at the leading order is the limiting equation (2.4) in \cite{BDKZ2}:
recall Section \ref{section4} that $U$ satisfies the same equation as the moment variable $G=(g_0,g_1,\dots,g_{2N-1})$. 
 Then we denote $\rho=\sqrt{2}g_0$, $q=\sqrt{2}g_1$ and $S=2g_2+\rho$ to obtain the acoustic system
\begin{align*}
        &\partial_t \rho+\partial_x q=0,\\
        &\partial_t q+\partial_x S=0,\\
        &\partial_t S+3\partial_x q=0.
    \end{align*}
\end{remark}


Denote $\widehat{u}_k$ by the first three components of $\widehat{U}_k$ and $\widehat{v}_k$ the last $(2N-3)$ components of $\widehat{U}_k$. Matching each order of $\epsilon$ in the equation
$$
\partial_t\begin{pmatrix}
    \sqrt{\epsilon}\widehat{u}_0+\epsilon\widehat{u}_1\\[1mm]
    \sqrt{\epsilon}\widehat{v}_0+\epsilon\widehat{v}_1
\end{pmatrix}
+ \frac{1}{\sqrt{\epsilon}}\begin{pmatrix}
    A_{11} & A_{12}\\[1mm]
    A_{12}^T & A_{22}
\end{pmatrix}
\partial_z\begin{pmatrix}
    \sqrt{\epsilon}\widehat{u}_0+\epsilon\widehat{u}_1\\[1mm]
    \sqrt{\epsilon}\widehat{v}_0+\epsilon\widehat{v}_1
\end{pmatrix}=
\frac{1}{\epsilon}\begin{pmatrix}
    0\\[1mm]
    -\sqrt{\epsilon}\widehat{v}_0-\epsilon\widehat{v}_1
\end{pmatrix},
$$
we obtain
\begin{align}
&O(\epsilon^{-1/2}):\qquad \widehat{v}_{0}=0,\label{IBVP3-vis-1}\\[1mm]
&O(\epsilon^0):\qquad\quad~
A_{11}\partial_z\widehat{u}_{0}
 = 0,  \label{IBVP3-vis-2}\\[1mm]
&O(\epsilon^0):\qquad \quad~
\widehat{v}_{1} = -A_{12}^T\partial_z\widehat{u}_{0}, \label{IBVP3-vis-3}\\[1mm]
&O(\epsilon^{1/2}):\qquad~ \partial_t 
        \widehat{u}_{0} + A_{11}
    \partial_z\widehat{u}_{1}+A_{12}
    \partial_z\widehat{v}_1 = 0. \label{IBVP3-vis-4}
\end{align}
Now we write $P_1=(P_+,P_-)$ to represent the eigenvectors associated with nonzero eigenvalues. Note that $(P_1,P_0)$ is orthogonal and therefore $I=P_1P_1^T+P_0P_0^T$. We multiply $P_1^T$ on the left of \eqref{IBVP3-vis-2} to obtain
$\Lambda_1 P_1^T\partial_z\widehat{u}_0 = 0$.
Here $\Lambda_1=\text{diag}(\lambda,-\lambda)$ represents the nonzero eigenvalues of $A_{11}$. Since $\widehat{u}_0(\infty,t)=0$, we know that 
\begin{align}\label{IBVP-vis-5}
P_1^T\widehat{u}_0=0.    
\end{align}
Then we multiply $P_0^T$ on the left of \eqref{IBVP3-vis-4} and use \eqref{IBVP3-vis-3} to get 
\begin{align}\label{IBVP3-vis-parabolic}
\partial_t(P_0^T\widehat{u}_{0}) - (P_0^TA_{12}A_{12}^TP_0)
\partial_{zz}(P_0^T\widehat{u}_0) = 0.
\end{align}
\begin{remark}
Recall that $P_0^TA_{12}A_{12}^TP_0=(\alpha_1^2\alpha_3^2)/\lambda^2=1$. The above equation is the heat equation (3.8) derived in \cite{BDKZ2}. Indeed, we denote $\widehat{u}_0=(\widehat{u}_{00},\widehat{u}_{01},\widehat{u}_{02})^T$ and compute 
$$
P_0^T\widehat{u}_0=\frac{1}{\lambda}(\alpha_2\widehat{u}_{00}-\alpha_1\widehat{u}_{02})=\frac{1}{\sqrt{3}}(\sqrt{2}\widehat{u}_{00}-\widehat{u}_{02}).
$$ 
Then with the setting $\widehat{\rho}=\sqrt{2}\widehat{u}_{00}$, $\widehat{q}=\sqrt{2}\widehat{u}_{01}$ and $\widehat{S}=2\widehat{u}_{02}+\widehat{\rho}$ we have $$
\partial_t(P_0^T\widehat{u}_{0})-\partial_{xx}(P_0^T\widehat{u}_{0})=0\quad \Rightarrow \quad \partial_t(\widehat{S}-3\widehat{\rho})-\partial_{xx}(\widehat{S}-3\widehat{\rho})=0,
$$
which is exactly the equation (3.8) in \cite{BDKZ2}.
\end{remark}

Furthermore, we multiply $P_1^T$ on the left of \eqref{IBVP3-vis-4} to get 
\begin{align}\label{IBVP3-vis-6}
P_1^T\widehat{u}_1=-\Lambda_1^{-1}P_1^TA_{12}\widehat{v}_1.
\end{align}
Here we use the fact $P_1^T\widehat{u}_0=0$ and the matching principle $\widehat{U}_1(\infty,t)=0$. Notice that the term $P_0^T\widehat{u}_1$ is not determined by the equations \eqref{IBVP3-vis-1}---\eqref{IBVP3-vis-4}. For our purpose, $P_0^T\widehat{u}_1$ can be given arbitrarily and the validity will be shown in Section \ref{Section6}.

Now we turn to consider the boundary conditions. Substituting the asymptotic solution into the boundary condition yields
\begin{align}\label{IBVP3-BC}
B_1\begin{pmatrix}
\bar{u}_0+\sqrt{\epsilon}\widehat{u}_0+\epsilon\bar{u}_1+\epsilon\widehat{u}_1\\[1mm]
\bar{v}_0+\sqrt{\epsilon}\widehat{v}_0+\epsilon\bar{v}_1+\epsilon\widehat{v}_1
\end{pmatrix}(0,t)=0. 
\end{align}
Denote $\Phi_k=(\phi_k(v_{N+1}),\dots,\phi_k(v_{2N}))^T$ for $0\leq k\leq 2N-1$. Then the boundary matrix $B_1$ can be written as
$B_1=-2(0,\Phi_1,0,\Phi_3,\dots,0,\Phi_{2N-1})$. Using \eqref{IBVP3-v0}, we write the $O(\epsilon^0)$ term in \eqref{IBVP3-BC} as
$$
-2(0,\Phi_1,0,\Phi_3,\dots,0,\Phi_{2N-1})\begin{pmatrix}
    \bar{u}_0\\ 
    0
\end{pmatrix}=-2(0,\Phi_1,0)\bar{u}_0=0. 
$$
Using the characteristic decomposition $\bar{u}_0=P_{+}\beta_{0+} + P_{0}\beta_{00} + P_{-}\beta_{0-}$ with $P_+,P_-,P_0$ given in \eqref{IBVP3-R1+R10R1-}, we have 
$$
\Phi_1 \beta_{0+} = \Phi_1 \beta_{0-}\quad \Leftrightarrow \quad \beta_{0+} = \beta_{0-}.
$$
This yields the reduced boundary condition for the hyperbolic system \eqref{IBVP3-u0}.
Note that the boundary condition $\beta_{0+}=\beta_{0-}$ means that the second component of $\bar{u}_0$ is equal to $0$, which implies the balance of fluxes for the limit problem in \cite{BDKZ2}.

Using \eqref{IBVP3-vis-1} and \eqref{IBVP-vis-5}, we find that the $O(\epsilon^{1/2})$ term in \eqref{IBVP3-BC} equals to zero. Indeed, we compute
$$
B_1\begin{pmatrix}
    \widehat{u}_0\\
    \widehat{v}_0
\end{pmatrix}
=-2(0,\Phi_1,0,\Phi_3,\dots,0,\Phi_{2N-1})\begin{pmatrix}
P_0P_0^T\widehat{u}_0\\[1mm]
0
\end{pmatrix}=-2(0,\Phi_1,0)P_0P_0^T\widehat{u}_0=0. 
$$
Note that the last equality is due to the expression of $P_0$ in \eqref{IBVP3-R1+R10R1-}. At last, we compute the $O(\epsilon)$ term in the boundary condition \eqref{IBVP3-BC}:
\begin{align}\label{IBVP3-BCep}
\epsilon B_1\begin{pmatrix}
\bar{u}_1+\widehat{u}_1\\[1mm]
\bar{v}_1+\widehat{v}_1
\end{pmatrix}(0,t)=0. 
\end{align}
By \eqref{IBVP3-v1}, \eqref{IBVP3-vis-3} and \eqref{IBVP-vis-5}, we have
\begin{align*}
(\Phi_3,0,\Phi_5,\dots,0,\Phi_{2N-1})(\bar{v}_1+\widehat{v}_1) 
=&- (\Phi_3,0,\Phi_5,\dots,0,\Phi_{2N-1})A_{12}^T(\partial_x\bar{u}_{0}+\partial_z\widehat{u}_0)\\[1mm]
=&-(0,0,\alpha_3\Phi_3)(\partial_x\bar{u}_{0}+\partial_z\widehat{u}_0)\\[1mm]
=&-(0,0,\alpha_3\Phi_3)(\partial_x\bar{u}_{0}+P_0\partial_z(P_0^T\widehat{u}_0)).
\end{align*}
Furthermore, we use the expression of $P_0$ to compute
\begin{align}\label{IBVP3-Final}
(\Phi_3,0,\Phi_5,\dots,0,\Phi_{2N-1})(\bar{v}_1+\widehat{v}_1) 
=-(0,0,\alpha_3\Phi_3)\partial_x\bar{u}_{0} +\frac{\alpha_1\alpha_3}{\lambda}\Phi_3\partial_z(P_0^T\widehat{u}_0).
\end{align}
On the other hand, using \eqref{IBVP3-vis-6} and the decomposition $\bar{u}_1=P_{+}\beta_{1+} + P_{0}\beta_{10} + P_{-}\beta_{1-}$, we have 
\begin{align*}
(0,\Phi_1,0)(\bar{u}_1+\widehat{u}_1)&=(0,\Phi_1,0)(P_+\beta_{1+}+P_0\beta_{10}+P_-\beta_{1-}+P_0P_0^T\widehat{u}_1+P_1P_1^T\widehat{u}_1)\\[1mm]
&=(0,\Phi_1,0)(P_+\beta_{1+}+P_-\beta_{1-}+P_1P_1^T\widehat{u}_1)\\[1mm]
&=(0,\Phi_1,0)(P_+\beta_{1+}+P_-\beta_{1-}-P_1\Lambda_1^{-1}P_1^TA_{12}\widehat{v}_1)\\[1mm]
&=(0,\Phi_1,0)(P_+\beta_{1+}+P_-\beta_{1-}+P_1\Lambda_1^{-1}P_1^TA_{12}A_{12}^TP_0\partial_z(P_0^T\widehat{u}_0)).
\end{align*} 
Using the expression of $P_+$ and $P_-$, we finally get
$$
(0,\Phi_1,0)(\bar{u}_1+\widehat{u}_1)=\left(\frac{1}{\sqrt{2}}\beta_{1+}-\frac{1}{\sqrt{2}}\beta_{1-}-\frac{\alpha_1\alpha_2\alpha_3^2}{\lambda^3}\partial_z(P_0^T\widehat{u}_0)\right)\Phi_1.
$$
Substituting this with \eqref{IBVP3-Final} into the boundary condition \eqref{IBVP3-BCep}, we have 
\begin{align}\label{IBVP3-ReducedBC}
    \begin{pmatrix}
        \Phi_1&\Phi_3
    \end{pmatrix}
    \begin{pmatrix}
        \frac{1}{\sqrt{2}} & \frac{-\alpha_1\alpha_2\alpha_3^2}{\lambda^3}\\[2mm]
        0 & \frac{\alpha_1\alpha_3}{\lambda}
    \end{pmatrix}
    \begin{pmatrix}
        \beta_{1+}\\[2mm]
        \partial_z(P_0^T\widehat{u}_0)
    \end{pmatrix}=
    \begin{pmatrix}
        \Phi_1 & \Phi_3
    \end{pmatrix}
    \begin{pmatrix}
    \frac{1}{\sqrt{2}}\beta_{1-}\\[2mm]
    (0,0,\alpha_3)\partial_x\bar{u}_0
    \end{pmatrix}.
\end{align}
Clearly, from this equation, we can uniquely solve $\beta_{1+}$ and $\partial_z(P_0^T\widehat{u}_0)$ in terms of $\beta_{1-}$ and $\partial_x\bar{u}_0$.
In this way, we complete the derivation of reduced boundary condition for \eqref{IBVP3-u1} and \eqref{IBVP3-vis-parabolic}.

\subsection{Second-type collision term $Q_2$: IBVP II}
In this subsection, we denote $U=U^{(2)}$ and consider the IBVP
\begin{align}\label{IBVP4}
\left\{\begin{array}{l}
U_t+AU_x=Q_2U/\epsilon,\\[2mm]
B_2U(0,t)=0.
\end{array}\right.
\end{align} 
The notations of $A_{11}$, $A_{12}$ and $A_{22}$ are the same as those in \eqref{IBVP3-A11} and \eqref{IBVP3-A22}.
Consider the asymptotic solution
$$
U_\epsilon = \sum_{k=0}^3\epsilon^{k/2}\bar{U}_k (x,t)+\sum_{k=0}^3\epsilon^{k/2}\widehat{U}_k\left(z,t\right)+\sum_{k=0}^3\epsilon^{k/2}\widetilde{U}_k\left(y,t\right),\qquad z=\frac{x}{\sqrt{\epsilon}},\quad y=\frac{x}{\epsilon}.
$$
Here $\bar{U}_k$ is the outer solution, $\widehat{U}_k$ and $\widetilde{U}_k$ represent the boundary-layer correction term. 
Thus, for the second boundary value problem with collision term $Q_2$ we expect kinetic and viscous boundary layers to appear.

By the matching principle, we have 
$$
\widehat{U}_k(\infty,t)=0,\qquad \widetilde{U}_k(\infty,t)=0.
$$
Denote $\bar{u}_k$ by the first three components of $\bar{U}_k$ and $\bar{v}_k$ the last $(2N-3)$ components of $\bar{U}_k$. For the outer equation, we match the coefficients of $\epsilon$ in the equation 
$$
\sum_{k=0}^3 \epsilon^{k/2}
\partial_t\begin{pmatrix}
    \bar{u}_k\\[1mm]
    \bar{v}_k
\end{pmatrix}
+\sum_{k=0}^3 \epsilon^{k/2}\begin{pmatrix}
    A_{11} & A_{12}\\[1mm]
    A_{12}^T & A_{22}
\end{pmatrix}
\partial_x\begin{pmatrix}
    \bar{u}_k\\[1mm]
    \bar{v}_k
\end{pmatrix}=
\sum_{k=0}^3 \epsilon^{(k-2)/2}\begin{pmatrix}
    0\\[1mm]
    -\bar{v}_k
\end{pmatrix}
$$
to obtain:
\begin{align}
&O(\epsilon^{(k-2)/2}):\quad \bar{v}_{k} = -\partial_t\bar{v}_{k-2} - A_{12}^T\partial_x \bar{u}_{k-2} - A_{22}\partial_x\bar{v}_{k-2}\label{outer-neq-p-v},\qquad 0\leq k\leq 3,\\[2mm]
&O(\epsilon^{k/2}):~\quad\quad \partial_t\bar{u}_k + A_{11} \partial_x \bar{u}_k = -A_{12}\partial_x\bar{v}_k,\qquad 0\leq k\leq 3.\label{outer-neq-p-u} 
\end{align}
Note that if the index $k$ of $\bar{u}_k$ or $\bar{v}_k$ is negative, the corresponding term equals to zero. Given the coefficients up to order $k-1$, we solve the $k$-th order term $\bar{v}_k$ by \eqref{outer-neq-p-v}. Having this, we notice that the equation \eqref{outer-neq-p-u} is a hyperbolic system with source term. 
Since this equation is given in the half space $x>0$, proper boundary conditions should be prescribed. To this end, we introduce the characteristic variable $\beta_{k+}$, $\beta_{k0}$ and $\beta_{k-}$ for \eqref{outer-neq-p-u} such that
$$
\bar{u}_k=P_{+}\beta_{k+} + P_{0}\beta_{k0} + P_{-}\beta_{k-}.
$$
where $P_{+}$, $P_{0}$ and $P_{-}$ are given in \eqref{IBVP3-R1+R10R1-}. Due to the classical theory of hyperbolic equation, one boundary condition should be given for $\beta_{k+}$.

Next we discuss the equations for viscous boundary layers where the spatial variable is $z=x/\sqrt{\epsilon}$.
Denote $\widehat{u}_k$ by the first three components of $\widehat{U}_k$ and $\widehat{v}_k$ the last $(2N-3)$ components of $\widehat{U}_k$. We match the coefficients of $\epsilon$ in the equation 
$$
\sum_{k=0}^3 \epsilon^{k/2}
\partial_t\begin{pmatrix}
    \widehat{u}_k\\[1mm]
    \widehat{v}_k
\end{pmatrix}
+\sum_{k=0}^3 \epsilon^{(k-1)/2}\begin{pmatrix}
    A_{11} & A_{12}\\[1mm]
    A_{12}^T & A_{22}
\end{pmatrix}
\partial_z\begin{pmatrix}
    \widehat{u}_k\\[1mm]
    \widehat{v}_k
\end{pmatrix}=
\sum_{k=0}^3 \epsilon^{(k-2)/2}\begin{pmatrix}
    0\\[1mm]
    -\widehat{v}_k
\end{pmatrix}
$$
to obtain:
\begin{align}
&O(\epsilon^{(k-2)/2}):\quad \widehat{v}_{k} = -\partial_t\widehat{v}_{k-2} - A_{12}^T\partial_z \widehat{u}_{k-1} - A_{22}\partial_z\widehat{v}_{k-1} ,\qquad 0\leq k\leq 3,\label{vis-k-v}\\[2mm]
&O(\epsilon^{(k-1)/2}):\quad A_{11} \partial_z \widehat{u}_k = - \partial_t\widehat{u}_{k-1} -A_{12}\partial_z\widehat{v}_k,\qquad 0\leq k\leq 3. \label{vis-k-u}
\end{align}
Now we show the steps to determine the viscous layer terms $\widehat{u}_k$ and $\widehat{v}_k$ with given lower order terms. At first, we get $\widehat{v}_k$ from \eqref{vis-k-v}. Recall that $\Lambda_1=\text{diag}(\lambda,-\lambda)$ and $P_1=(P_+,P_-)$ are the nonzero eigenvalues and associated eigenvectors of $A_{11}$. It follows from \eqref{vis-k-u} that
\begin{align*}
P_1^T\widehat{u}_k=-\Lambda^{-1} P_1^T(\partial_t\widehat{u}_{k-1} +A_{12}\partial_z\widehat{v}_k).
\end{align*}
Then it suffices to solve $P_0^T\widehat{u}_k$. To this end, we multiply $P_0^T$ on the left of \eqref{vis-k-u} to obtain
\begin{align*}
\partial_t(P_0^T\widehat{u}_{k}) + P_0^TA_{12}\partial_z\widehat{v}_{k+1} =0.
\end{align*}
By using \eqref{vis-k-v} and the decomposition $\widehat{u}_{k}=P_0P_0^T\widehat{u}_{k}+P_1P_1^T\widehat{u}_{k}$, we have 
\begin{align}\label{vis-k-parabolic}
\partial_t(P_0^T\widehat{u}_{k}) - (P_0^TA_{12}A_{12}^TP_0)\partial_{zz}(P_0^T\widehat{u}_{k}) = \widehat{f}_k.
\end{align}
for $0\leq k\leq 2$ with 
$$
\widehat{f}_k=P_0^TA_{12}\partial_z[\partial_t \widehat{v}_{k-1}+A_{12}^TP_1\partial_z(P_1^T\widehat{u}_{k})+A_{22}\partial_z\widehat{v}_{k}].
$$
With proper initial and boundary conditions, we can solve the parabolic partial differential equation \eqref{vis-k-parabolic} to get $P_0^T\widehat{u}_{k}$. 
\begin{remark}
    From equations \eqref{vis-k-v} and \eqref{vis-k-u}, we can determine $\widehat{v}_k$ and $P_1^T\widehat{u}_k$ for  $0\leq k\leq 3$, but the term $P_0^T\widehat{u}_k$ only for $0\leq k\leq 2$. However, we notice that $\widehat{f}_k$ in the equation \eqref{vis-k-parabolic} is well-defined for $k=3$. Thus we can still solve $P_0^T\widehat{u}_3$  from \eqref{vis-k-parabolic} if proper initial and boundary conditions are prescribed. 
\end{remark}

For the boundary-layer term $\widetilde{U}_k$, we match the coefficients of $\epsilon$ in the equation 
$$
\sum_{k=0}^3\epsilon^{k/2}\partial_t\widetilde{U}_{k}+\sum_{k=0}^3\epsilon^{(k-2)/2}A\partial_y
\widetilde{U}_k
=\sum_{k=0}^3\epsilon^{(k-2)/2}Q_2\widetilde{U}_k
$$
to obtain equations for $\widetilde{U}_k$:
\begin{align}\label{Knudsen-k}
O(\epsilon^{(k-2)/2})~~0\leq k\leq 3\qquad
A\partial_y
\widetilde{U}_k
=Q_2\widetilde{U}_k-\partial_t\widetilde{U}_{k-2}.
\end{align}
For each $k$, the boundary layer term $\widetilde{U}_k$ satisfies the same ordinary differential system 
$$
A\partial_y
\widetilde{U}_k
=Q_2\widetilde{U}_k+l.o.t.
$$
Here we briefly write the lower-order term as l.o.t. and omit the explicit expressions. 
For the discussion of this equation, we write the component-wise expression 
$$
\widetilde{U}_k = \begin{pmatrix}
    \widetilde{u}_k\\[1mm]
    \widetilde{v}_k
\end{pmatrix},\quad \widetilde{u}_k = (\widetilde{U}_{k,0},\widetilde{U}_{k,1},\widetilde{U}_{k,2})^T,\quad \widetilde{v}_k=(\widetilde{U}_{k,3},\widetilde{U}_{k,4},\dots,\widetilde{U}_{k,2N-1})^T.
$$ 
According to the expression of $A$ and $Q_2$, we have 
\begin{align*}
    \widetilde{U}_{k,1} &= l.o.t.,\qquad \alpha_1\widetilde{U}_{k,0} + \alpha_2\widetilde{U}_{k,2} = l.o.t., 
    \qquad \widetilde{U}_{k,3} = l.o.t..
\end{align*}
Due to the last equality, we also derive $\alpha_4\widetilde{U}_{k,4}+\alpha_3\widetilde{U}_{k,2}=l.o.t.$. To determine $\widetilde{v}_{k,H} = (\widetilde{U}_{k,4},\dots,\widetilde{U}_{k,2N-1})$, we have 
\begin{align}\label{eq-n-U0neqtilde-1}
\widetilde{A}_H\partial_y\widetilde{v}_{k,H} = -\widetilde{v}_{k,H},\qquad \widetilde{A}_H=\begin{pmatrix}
   0 & \alpha_{5} &  &  \\
   \alpha_5 & 0 & \ddots &  \\
    & \ddots & \ddots & \alpha_{2N-1} \\[1mm]
    &  & \alpha_{2N-1} & 0 
\end{pmatrix}.
\end{align}
For the bounded solution, the initial data should satisfy 
$$
\widetilde{v}_{k,H}(0)= \widetilde{R}_{-} \gamma_{k-}.
$$
where $\widetilde{R}_{-}$ represents the eigenvectors of $-\widetilde{A}_H^{-1}$ associated with negative eigenvalues. To conclude, we find that 
\begin{align}
&\widetilde{u}_k = 
    \widetilde{N} \widetilde{v}_{k,H} +l.o.t.= 
    \widetilde{N} \widetilde{R}_{-}\gamma_{k-}+l.o.t.~~\text{with}~~
    \widetilde{N}=\begin{pmatrix}
    \frac{\alpha_2\alpha_4}{\alpha_1\alpha_3} & 0 & \cdots & 0\\[2mm]
    0 & 0 & \cdots& 0\\[1mm]
    \frac{-\alpha_4}{\alpha_3} & 0 & \cdots& 0
\end{pmatrix} \label{knudsen-k1}\\
&\widetilde{v}_k = \begin{pmatrix}
    0 \\[1mm]
    \widetilde{v}_{k,H}
\end{pmatrix}+l.o.t.
=\begin{pmatrix}
    0 \\[1mm]
    \widetilde{R}_{-}\gamma_{k-}
\end{pmatrix}+l.o.t.. \label{knudsen-k2}
\end{align}

We substitute the asymptotic solution into the boundary condition and match the coefficients of $\epsilon$ to get
$$
O(\epsilon^{k/2}),~0\leq k\leq 3:\qquad B_2\begin{pmatrix}
    \bar{u}_k+\widehat{u}_k+\widetilde{u}_k\\[1mm]
    \bar{v}_k+\widehat{v}_k+\widetilde{v}_k
\end{pmatrix}(0,t)=0.
$$
Recall the relations $\bar{u}_k=P_{+}\beta_{k+} + P_{0}\beta_{k0} + P_{-}\beta_{k-}$ and $\widehat{u}_k=P_1(P_1^T\widehat{u}_k)+P_0(P_0^T\widehat{u}_k)$.
For the outer solution $\bar{U}_k(0,t)$ and the viscous boundary layer $\widehat{U}_k(0,t)$, the unknowns are $\beta_{k+}$ and $P_0^T\widehat{u}_k$ while other terms are regarded as given functions. Furthermore, we use \eqref{knudsen-k1} and \eqref{knudsen-k2} to write
\begin{align}\label{eq-problem4-ODE}
&B_2
\begin{pmatrix}
P_{+} & P_{0} & \widetilde{N}\widetilde{R}_{-} \\[1mm]
0 & 0 & 0 \\[1mm]
0 & 0 & \widetilde{R}_{-} 
\end{pmatrix}
\begin{pmatrix}
\beta_{k+} \\[2mm]
P_0^T\widehat{u}_k  \\[1mm]
\gamma_{k-}
\end{pmatrix}=RHS.
\end{align}
Notice that all terms on the right-hand side (RHS) are known and we omit the expression in discussing the solvability of this algebraic equation. 

\begin{remark}
For further details on  the numerical solutions of this boundary condition, we refer the interested reader to \cite{BDKZ2}.
\end{remark}
For the case $n\geq 3$, we know that the matrix $B_2$ satisfies the strictly dissipative condition which implies the generalized Kreiss condition \cite{Yong-99-IUMJ,ZY-22}. In this case, the result in \cite{ZY-22} ensures the solvability (see Lemma 4.6 in \cite{ZY-22}). For the case $n=2$, we use the expression for $B_2$ to write 
$$
B_2 = (I_N,(n-1)I_N)V^T = 2\begin{pmatrix}
    \Phi_0,0,\Phi_2,0,\cdots,\Phi_{2N-2},0
\end{pmatrix}.
$$
Then it follows that
\begin{align*}
&B_2
\begin{pmatrix}
P_{+} & P_{0} & \widetilde{N}\widetilde{R}_{-} \\[1mm]
0 & 0 & 0 \\[1mm]
0 & 0 & \widetilde{R}_{-} 
\end{pmatrix}=2\begin{pmatrix}
    \Phi_0,\Phi_2,\cdots,\Phi_{2N-2}
\end{pmatrix}
\begin{pmatrix}
\frac{\alpha_1}{\sqrt{2}\lambda} & \frac{\alpha_2}{\lambda} & * \\[2mm]
\frac{\alpha_2}{\sqrt{2}\lambda} & \frac{-\alpha_1}{\lambda} & * \\[2mm]
0 & 0 & \widetilde{R}^{e}_{-}
\end{pmatrix}.
\end{align*}
Here $\widetilde{R}^{e}_{-}$ consists of all the even-numbered rows of the original $\widetilde{R}_{-}$. It is not difficult to verify that the matrix $(\Phi_0,\Phi_2,\cdots,\Phi_{2N-2})$ is invertible. Moreover, we compute
$$
\det\begin{pmatrix}
\frac{\alpha_1}{\sqrt{2}\lambda} & \frac{\alpha_2}{\lambda}\\[2mm]
\frac{\alpha_2}{\sqrt{2}\lambda} & \frac{-\alpha_1}{\lambda}
\end{pmatrix}=-\frac{\alpha_1^2+\alpha_2^2}{\sqrt{2}\lambda^2}=-\frac{1}{\sqrt{2}}\neq 0.
$$ 
At last, we show that the matrix $\widetilde{R}^{e}_{-}$ is invertible. Indeed, define the transform matrix $T$ such that the moments $\widetilde{v}_{k,H}$ can be reordered according to the even-odd partition:
$$
T~\widetilde{v}_{k,H}= \begin{pmatrix}
    \widetilde{v}_{k,e}\\[1mm]
    \widetilde{v}_{k,o}
\end{pmatrix},\quad \widetilde{v}_{k,e}=(\widetilde{v}_{k,4},\dots,\widetilde{v}_{k,2N-2}),\quad \widetilde{v}_{k,o}=(\widetilde{v}_{k,5},\dots,\widetilde{v}_{k,2N-1}).
$$
By the expression of $\widetilde{A}_H$, we can write 
$$
T\widetilde{A}_HT^{-1} = \begin{pmatrix}
    0 & A_c\\[1mm]
    A_c^T & 0
\end{pmatrix},\qquad A_c=\begin{pmatrix}
\alpha_5 & 0 &  \cdots & 0\\
\alpha_6 & \alpha_7 & \ddots & \vdots\\
 & \ddots & \ddots & 0\\
 & & \alpha_{2N-2} & \alpha_{2N-1}
\end{pmatrix}
$$
with $A_c$ an invertible matrix. There exist $R_e$ and $R_o$ such that 
$$
\begin{pmatrix}
    0 & A_c\\[1mm]
    A_c^T & 0
\end{pmatrix}
\begin{pmatrix}
    R_e & R_e\\[1mm]
    R_o & -R_o
\end{pmatrix}=\begin{pmatrix}
    R_e & R_e\\[1mm]
    R_o & -R_o
\end{pmatrix}
\begin{pmatrix}
    \Lambda_c & 0\\[1mm]
    0 & -\Lambda_c
\end{pmatrix}
$$
and $R_e^TR_e=R_o^TR_o=\frac{1}{2}I$. Here $\Lambda_c$ is a positive definite diagonal matrix representing the positive eigenvalues of $\widetilde{A}_H$. Then we can choose the eigenvectors $\widetilde{R}_{-}$ for $\widetilde{A}_H$ by
$$
\widetilde{R}_{-}=T^{-1}\begin{pmatrix}
    R_e\\
    -R_o
\end{pmatrix}.
$$ 
Therefore $\widetilde{R}^{e}_{-}=(I,~0)T\widetilde{R}_{-}=R_e$ is invertible. Finally, we prove the solvability of \eqref{eq-problem4-ODE} in the case $n=2$.

\section{Error Estimate}\label{Section6}

As a preparation for the error estimate, we prove the following results 
\begin{lemma}\label{lemma5.1}
Consider the IBVP 
\begin{align}\label{estimate-eq1}
\left\{\begin{array}{l}
     \partial_t U + A \partial_x U = \dfrac{1}{\epsilon} Q U - \begin{pmatrix}
        0 \\
        E_1
    \end{pmatrix}-E_2, \\
    B_k U(0,t) = 0, \\[2mm]
    U(x,0) = 0.
\end{array}\right.
\end{align}

Here the unknown $U$ is a $m$-dimensional vector, $A$ is a symmetric matrix, $B$ satisfies the dissipative condition and $Q=\text{diag}(0,-I_{r})$ with $r\leq m$. Moreover, we assume that the residual terms $E_1\in \mathbb{R}^{r}$ and $E_2\in \mathbb{R}^{m}$ belong to $H^1([0,T]\times \mathbb{R}^+)$ and satisfy
\begin{align*}
&\|\partial_tE_1(\cdot,t)\|_{L^2(\mathbb{R}^+)}+\|E_1(\cdot,t)\|_{L^2(\mathbb{R}^+)} \leq C\epsilon^{1/2+\kappa}\\[2mm]
&\|\partial_tE_2(\cdot,t)\|_{L^2(\mathbb{R}^+)}+\|E_2(\cdot,t)\|_{L^2(\mathbb{R}^+)} \leq C\epsilon^{1+\kappa}\qquad \kappa>0.
\end{align*}
Then the solution $U$ of the problem \eqref{estimate-eq1} satisfies the estimate
\begin{align*}
    \max_{t\in[0,T]}\|U(\cdot,t)\|_{H_1(\mathbb{R}^+)}\leq C\epsilon^\kappa.
\end{align*}
\end{lemma}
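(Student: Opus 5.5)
The argument is an $L^2$ energy estimate applied first to $U$ and then to $\partial_t U$; spatial derivatives are recovered afterwards from the equation itself. Because $U(x,0)=0$ and the residuals vanish at $t=0$ (or at least $E_1(\cdot,0), E_2(\cdot,0)$ are controlled by the same bounds), the data of the differentiated problem are also of the size of the residuals. Write $U=(u,v)$ with $v\in\mathbb{R}^r$, so that $U^TQU=-|v|^2$.

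\textbf{Step 1 (basic $L^2$ estimate).} Multiply the equation by $U^T$ and integrate over $x>0$. Since $A$ is symmetric,
\[
\int_0^\infty U^TA\,\partial_xU\,dx=-\tfrac12\,U^TAU(0,t).
\]
Because $B_kU(0,t)=0$, the dissipative condition (Lemma~\ref{lemma2.2}, or Lemma~\ref{lemma2.3} with equality) gives $U^TAU(0,t)\le 0$, so the boundary term has a favorable sign. We obtain
\[
\tfrac12\frac{d}{dt}\|U\|^2+\tfrac1\epsilon\|v\|^2\le -\int v^TE_1-\int U^TE_2 .
\]
The $E_1$ term is absorbed into the dissipation: $|\int v^TE_1|\le \tfrac1{2\epsilon}\|v\|^2+\tfrac\epsilon2\|E_1\|^2$, and $\epsilon\|E_1\|^2\le C\epsilon^{2+2\kappa}$. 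The $E_2$ term is not in the range of $Q$, so it is bounded by $\|U\|\,\|E_2\|\le \|U\|^2+C\epsilon^{2+2\kappa}$. Gronwall on $[0,T]$ then gives $\|U(t)\|\le C\epsilon^{1+\kappa}$, together with $\frac1\epsilon\int_0^T\|v\|^2\le C\epsilon^{2+2\kappa}$. This is where the scalings $\epsilon^{1/2+\kappa}$ and $\epsilon^{1+\kappa}$ are matched to the weights $\epsilon$ and $1$.

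\textbf{Step 2 (time derivative).} $W=\partial_tU$ solves the same system with residuals $\partial_tE_1,\partial_tE_2$ and the same boundary condition, because $B_k$ is constant. Its initial value is $W(x,0)=-(0,E_1(x,0))^T-E_2(x,0)$, since $U(\cdot,0)=0$ and therefore $QU(\cdot,0)=0$ and $\partial_xU(\cdot,0)=0$. Hence $\|W(0)\|\le C\epsilon^{1/2+\kappa}$. Repeating Step~1 yields $\max_t\|\partial_tU\|\le C\epsilon^{1/2+\kappa}$ and $\frac1\epsilon\int_0^T\|\partial_tv\|^2\le C\epsilon^{1+2\kappa}$.

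\textbf{Step 3 (spatial derivative, the main obstacle).} Since $A$ need not be invertible and $Q/\epsilon$ is stiff, writing $A\partial_xU=-\partial_tU+\tfrac1\epsilon QU-(0,E_1)^T-E_2$ requires a pointwise-in-time bound on $\tfrac1\epsilon\|v(t)\|$, not just the time-integrated one from Step~1. I would obtain it from the $v$-equation using the differentiated estimate. Multiply the $v$-rows by $v$ and use the bound on $\partial_tU$: this gives $\frac1\epsilon\|v\|^2\lesssim \|\partial_tU\|\|v\|+\|\partial_xU\|\|v\|+\|E_1\|\|v\|$. For the $u$-block, the characteristic structure is used: in the applications $A_{11}$ is invertible (Problems I and II for $Q_1$), or the null direction $P_0$ is controlled through $P_0^TA_{12}$, which is what the coupling to $v$ provides. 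Then $\|\partial_xu\|$ is bounded by $\|\partial_t u\|+\|\partial_x v\|$.

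For $\partial_xv$ itself, differentiating in $x$ is not allowed because of the boundary. I would therefore combine the $v$-equation with the symmetric structure: $\partial_x$ of the $v$-components is recovered from $\partial_tU$ and $\tfrac1\epsilon v$ in the noncharacteristic part of $A_{22}$. In the worst case I would fall back on proving the $H^1$ bound at the level $C\epsilon^{\kappa}$ only, using $\|\partial_xU\|\le C(\|\partial_tU\|+\epsilon^{-1}\|v\|+\text{residuals})$. This gives $\epsilon^{1/2+\kappa}+\epsilon^{-1}\|v\|$. By Step~2 and a Sobolev-in-time argument, $\sup_t\|v\|^2\le \int\|v\|^2+2(\int\|v\|^2)^{1/2}(\int\|\partial_tv\|^2)^{1/2}\le C\epsilon^{2+2\kappa}$, so $\epsilon^{-1}\|v\|\le C\epsilon^{\kappa}$. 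Combined with Steps~1–2, this is exactly the claimed $\max_t\|U\|_{H^1}\le C\epsilon^\kappa$.
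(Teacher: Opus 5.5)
Your Steps 1 and 2 are the paper's argument: the energy estimate with Gronwall for $U$, then the same estimate for $\partial_tU$ with initial value $-(0,E_1(\cdot,0))^T-E_2(\cdot,0)$. The fallback at the end of Step 3 is exactly the paper's last step, so the proposal is essentially the paper's proof. Two things in Step 3 need fixing.

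\emph{The obstacle you describe does not exist.} Step 1 already gives the pointwise bound $\max_t\|U(\cdot,t)\|_{L^2}\le C\epsilon^{1+\kappa}$, hence $\epsilon^{-1}\|v(\cdot,t)\|\le C\epsilon^{\kappa}$ for every $t$. Your interpolation in time is correct (since $v(\cdot,0)=0$ it even gives $\sup_t\|v\|^2\le C\epsilon^{5/2+2\kappa}$), but it is superfluous. The estimate $\tfrac1\epsilon\|v\|^2\lesssim\dots+\|\partial_xU\|\,\|v\|$ is circular and should go.

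\emph{The decisive inequality needs $A$ invertible.} The bound $\|\partial_xU\|\le C(\|\partial_tU\|+\epsilon^{-1}\|v\|+\text{residuals})$ holds only if $A$ is invertible, yet you say $A$ need not be. Invertibility is indispensable. Take $A=0$ (symmetric, trivially dissipative) with residuals supported away from $x=0$. Then $\partial_tu=-E_{2,u}$, so $\partial_xu(\cdot,t)=-\int_0^t\partial_xE_{2,u}\,ds$, and the hypotheses do not control this at all.

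So no block-wise argument can replace invertibility. Yours would also stall on $\ker A_{22}$, which is nontrivial in the $Q_2$ case: there $A_{22}$ is an odd-sized tridiagonal matrix with zero diagonal, so its spectrum is symmetric about $0$.

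The paper uses invertibility tacitly too. It holds in all four applications because $V^{-1}AV=\mathrm{diag}(-\Lambda,\Lambda)$ and the Hermite nodes $\pm z_i$ are nonzero, since $\phi_{2N}(0)\neq0$. Add this hypothesis and drop the block-wise detour; your fallback is then a complete proof.
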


\begin{proof}
For the estimate, we multiply \eqref{estimate-eq1} with $U^T$ from the left to obtain
$$
\frac{d}{dt}\left(U^TU\right)+ \left(U^TAU\right)_{x}= -\dfrac{2}{\epsilon}
\left|U_{neq}\right|^2-2\big(U_{neq}\big)^TE_1-2U^TE_{2}.
$$
Here $U_{neq}$ represents the last $r$ components of $U$. From the last equation, we use the Cauchy inequality and derive
\begin{align*}
\frac{d}{dt}\left(U^TU\right)+ \left(U^TAU\right)_{x}= & -\dfrac{2}{\epsilon}
\left|U_{neq}\right|^2-2\big(U_{neq}\big)^TE_1 -2U^TE_{2}\\[2mm]
\leq &  -\dfrac{1}{\epsilon}
\left|U_{neq}\right|^2+ \epsilon|E_1|^2 + |U|^2 + |E_{2}|^2.
\end{align*}
Integrating the last inequality over $x\in[0,+\infty)$ yields
\begin{align}
&\frac{d}{dt}\|U(\cdot,t)\|_{L^2(\mathbb{R}^+)}^2 - U^TAU|_{x=0}\nonumber \\[2mm]
\leq &~~\epsilon \|E_1(\cdot,t)\|_{L^2(\mathbb{R}^+)}^2 +  \|U(\cdot,t)\|_{L^2(\mathbb{R}^+)}^2 + \|E_2(\cdot,t)\|_{L^2(\mathbb{R}^+)}^2.\label{5.14}
\end{align}
For the boundary value $U(0,t)$ satisfying $B_1U(0,t)=0$ or $B_2U(0,t)=0$, we use Lemma \ref{lemma2.2} and Lemma \ref{lemma2.3} to obtain
$$
U^TAU|_{x=0}\leq 0.
$$ 
Applying Gronwall's inequality to \eqref{5.14}, we have $\forall t\in[0,T]$:
\begin{align*}
&\|U(\cdot,t)\|_{L^2(\mathbb{R}^+)}^2 
\leq C\bigg(\epsilon\|E_1\|^2_{L^2([0,T] \times \mathbb{R}^+ )} + \|E_2\|^2_{L^2([0,T] \times \mathbb{R}^+)} \bigg)\leq C\epsilon^{2+2\kappa}. 
\end{align*}
Next we take $\partial_t$ in the equation \eqref{estimate-eq1} to obtain
\begin{align}\label{estimate-eq2}
\left\{\begin{array}{l}
     \partial_t U_t + A \partial_x U_t = \dfrac{1}{\epsilon} Q U_t - \begin{pmatrix}
        0 \\
        \partial_tE_1
    \end{pmatrix}-\partial_tE_2, \\
    B_k U_t(0,t) = 0, \\[2mm]
    U_t(x,0) = U_{t,0}(x).
\end{array}\right.
\end{align}
Considering $t=0$ in \eqref{estimate-eq1}, we know that 
$$
\|U_{t,0}\|_{L^2(\mathbb{R}^+)}\leq C(\|E_1(\cdot,0)\|_{L^2(\mathbb{R}^+)}+\|E_2(\cdot,0)\|_{L^2(\mathbb{R}^+)})\leq C\epsilon^{1/2+\kappa}.
$$ Following the $L^2$ estimate for $U$, we have 
\begin{align*}
\|U_t(\cdot,t)\|_{L^2(\mathbb{R}^+)}^2 
\leq&~ C\bigg(\|U_{t,0}\|_{L^2(\mathbb{R}^+)}^2+ \epsilon\|\partial_tE_1\|^2_{L^2([0,T] \times \mathbb{R}^+ )} + \|\partial_tE_2\|^2_{L^2([0,T] \times \mathbb{R}^+)} \bigg)\\[1mm]
\leq&~ C\epsilon^{1+2\kappa}.
\end{align*}
At last, we use the equation \eqref{estimate-eq1} to get 
\begin{align*}
\|U_x(\cdot,t)\|_{L^2(\mathbb{R}^+)} \leq&~ C\Big(\|U_t(\cdot,t)\|_{L^2(\mathbb{R}^+)} +\epsilon^{-1} \|U(\cdot,t)\|_{L^2(\mathbb{R}^+)}
+ \epsilon^{1/2+\kappa}\Big)\\[1mm]
\leq &~C\epsilon^{\kappa}.
\end{align*}
This completes the proof of theorem.
\end{proof}

In the following subsections, we verify that the asymptotic solutions constructed in Section \ref{section5} satisfy the assumptions of Lemma \ref{lemma5.1}.

\subsection{First-type collision term $Q_1$, IBVP I} 
At first, we prescribe some additional assumptions for the initial data such that the solutions are smooth enough and the initial layers are avoided. Specifically, we assume that (i) the initial data for the original relaxation problem are given in equilibrium which means 
$$
U(x,0)=\bar{U}_0(x,0)+\epsilon \bar{U}_1(x,0)
= \begin{pmatrix}
    \bar{u}_0\\
    \bar{v}_0
\end{pmatrix}(x,0)+\epsilon\begin{pmatrix}
    \bar{u}_1\\
    \bar{v}_1
\end{pmatrix}(x,0)
$$
where $\bar{v}_0(x,0)$ and $\bar{v}_1(x,0)$ satisfy the relation \eqref{IBVP1-bar-v0} and \eqref{IBVP1-bar-v1}.
(ii) We assume that the initial data $\bar{u}_0(x,0)$ and $\bar{u}_1(x,0)$ are smooth enough and compatible with the boundary condition \eqref{reducedBC1} up to a certain order such that $\bar{U}_0\in CH_T^3$ and $\bar{U}_1\in CH_T^2$ with the definition
$$
CH_T^p = \cap_{k\leq p} C^k([0,T];H^{p-k}(\mathbb{R}^+)).
$$
By assumption (i), the initial data for the difference between the exact solution and the asymptotic solution should be zero. Namely, $U_{err}(x,0)=U(x,0)-U_{\epsilon}(x,0)=0$. Moreover, thanks to assumptions (i) and (ii), it is not difficult to check that the initial data $U(x,0)=\bar{U}_0(x,0)+\epsilon \bar{U}_1(x,0)$ are compatible with the original boundary condition $B_1U(0,t)=0$ which means 
$
B_1\bar{U}_0(0,0)=B_1\bar{U}_1(0,0)=0.
$

Denote by $\mathcal{L}_1$ the differential operator
\begin{equation*}
	\mathcal{L}_1(U):=\partial_tU+A\partial_{x}U-Q_1U/\epsilon.
\end{equation*}
Clearly, the solution to the original relaxation system satisfies $\mathcal{L}_1(U)=0$. If we substitute the asymptotic solution into the operator, a residual term may occur.
By \eqref{IBVP1-bar-v0}---\eqref{IBVP1-bar-u1}, we have
\begin{align}
\mathcal{L}_1(U_\epsilon)=\mathcal{L}_1(\bar{U}_0+\epsilon \bar{U}_1)=&\begin{pmatrix}
0\\
E_1
\end{pmatrix},\quad
E_1=\epsilon 
(\partial_{t}\bar{v}_{1} +A_{12}^T\partial_x \bar{u}_1+A_{22}\partial_x \bar{v}_1).\label{residual-1}
\end{align}
Then we know that $U_{err}=U-U_{\epsilon}$ satisfies the equation in \eqref{estimate-eq1} and the residual term $E_1$ satisfies the assumption in Lemma \ref{lemma5.1} with $\kappa=1/2$. At last, we recall the boundary condition and see that $BU_{err}(0,t)=BU(0,t)-BU_{\epsilon}(0,t)=0$. By exploiting the error estimate in Lemma \ref{lemma5.1}, we have 
$$
\max_{t\in[0,T]}\|U_{err}(\cdot,t)\|_{H_1(\mathbb{R}^+)}\leq C\epsilon^{1/2}.
$$

\subsection{First-type collision term $Q_1$, IBVP II} The discussion for the outer solution is similar. We assume that (i) the initial data for the original relaxation problem are given in equilibrium which means 
$$
U(x,0)=\bar{U}_0(x,0)+\epsilon \bar{U}_1(x,0)
= \begin{pmatrix}
    \bar{u}_0\\
    \bar{v}_0
\end{pmatrix}(x,0)+\epsilon\begin{pmatrix}
    \bar{u}_1\\
    \bar{v}_1
\end{pmatrix}(x,0)
$$
where $\bar{v}_0(x,0)$ and $\bar{v}_1(x,0)$ satisfy the relation \eqref{IBVP1-bar-v0} and \eqref{IBVP1-bar-v1}.
(ii) We assume that the initial data $\bar{u}_0(x,0)$ and $\bar{u}_{1}(x,0)$ are smooth enough and compatible with the reduced boundary condition up to a certain order such that $\bar{U}_0\in CH_T^3$ and $\bar{U}_1\in CH_T^2$. 
(iii) We also assume that the initial data $U(x,0)=\bar{U}_0(x,0)+\epsilon \bar{U}_1(x,0)$ are compatible with the original boundary condition $B_2U(0,t)=0$ which means 
$
B_2\bar{U}_0(0,0)=B_2\bar{U}_1(0,0)=0.
$

Recall 
\eqref{IBVP2-U0tilde} and \eqref{IBVP2-U1tilde} that 
\begin{align}
\mathcal{L}_1(U_\epsilon)=\mathcal{L}_1(\bar{U}_0+\epsilon \bar{U}_1+\widetilde{U}_0+\epsilon \widetilde{U}_1)=&\begin{pmatrix}
0\\
E_1
\end{pmatrix}+E_2\label{residual-2}
\end{align}
with $E_1$ given in \eqref{residual-1} and $E_2=\epsilon \partial_t\widetilde{U}_1$. Based on assumption (ii), we know that the outer solutions are smooth. According to the classical existence theory \cite{BenzoniSerre}, we also have 
$\bar{U}_0(0,t)\in H^3(0,T)$ and $\bar{U}_1(0,t)\in H^2(0,T)$.
Then due to the boundary condition \eqref{eq:BClinear}, we get
$\widetilde{U}_0(0,t)\in H^3(0,T)$. Similarly, we have $\widetilde{U}_1(0,t)\in H^2(0,T)$. Clearly, the solutions $\widetilde{U}_0$ and $\widetilde{U}_1$ of the ODE system decay exponentially fast with respect to $x\rightarrow \infty$ and thereby admit the regularity in Lemma \ref{lemma5.1}. We observe that 
$$
\|E_2\|_{L^2([0,T]\times \mathbb{R}^+)}^2 = \int_0^T \int_0^\infty \epsilon^2\left|\widetilde{U}_1\left(\frac{x}{\epsilon},t\right)\right|^2dxdt = \epsilon^3 \int_0^T \int_0^\infty \left|\widetilde{U}_1\left(y,t\right)\right|^2dydt\leq C\epsilon^{3}.
$$
A similar argument holds for $\|\partial_tE_2\|_{L^2([0,T]\times \mathbb{R}^+)}\leq C\epsilon^{3/2}$. Thus we may take $\kappa=1/2$ in Lemma \ref{lemma5.1}. 

Due to assumption (iii) together with the uniqueness of \eqref{eq:BClinear}, we have $\widetilde{U}_0(0,0)=0$. Similarly, we also obtain $\widetilde{U}_1(0,0)=0$. This means $\widetilde{U}_0(x,0)= \widetilde{U}_1(x,0)\equiv 0$ and thereby $U_{err}(x,0)=U(x,0)-U_\epsilon(x,0)=0$. At last, we conclude from \eqref{IBVP2-BC} that the error $U_{err}=U-U_\epsilon$ satisfies the boundary condition $BU_{err}(0,t)=0$. Using Lemma \ref{lemma5.1}, we have 
$$
\max_{t\in[0,T]}\|U_{err}(\cdot,t)\|_{H_1(\mathbb{R}^+)}\leq C\epsilon^{1/2}.
$$

\subsection{Conclusion for the first-type collision term $Q_1$} 
Now we recall the setting for the multi-edges problem. The IBVP I corresponds to the term $U^{(1)}=\sum_{i=1}^nG^{(i)}$ and the IBVP II corresponds to the problems for $U^{(k)}=G^{(k)}-G^{(1)}$ with $k\geq 2$. Combining the results above, we obtain
for any $t\in [0,T]$ and $1\leq k\leq n$:
\begin{align*}
\|U^{(k)}(\cdot,t)-U^{(k)}_{\epsilon}(\cdot,t)\|_{L^\infty(\mathbb{R}^+)}\leq C
\|U^{(k)}(\cdot,t)-U^{(k)}_{\epsilon}(\cdot,t)\|_{H_1(\mathbb{R}^+)}\leq C\epsilon^{1/2}.
\end{align*}
Then we denote 
\begin{align*}
    G^{(1)}_\epsilon = \frac{1}{n}\Big(U^{(1)}_{\epsilon}-\sum_{k=2}^nU^{(k)}_{\epsilon}\Big),\qquad 
    G^{(i)}_\epsilon = U^{(i)}_\epsilon + G^{(1)}_\epsilon,\quad 2\leq i\leq n.
\end{align*}
By computation, we have 
\begin{align*}
&\|G^{(1)}-G^{(1)}_{\epsilon}\|_{L^\infty(\mathbb{R}^+)} = \frac{1}{n}\|U^{(1)}-U^{(1)}_{\epsilon} - \sum_{k=2}^n\Big(U^{(k)}-U^{(k)}_{\epsilon}\Big)\|_{L^\infty(\mathbb{R}^+)}\leq C\epsilon^{1/2}\\[2mm] 
&\|G^{(i)}-G^{(i)}_{\epsilon}\|_{L^\infty(\mathbb{R}^+)} = \|U^{(i)}-U^{(i)}_{\epsilon} + G^{(1)}-G^{(1)}_{\epsilon} \|_{L^\infty(\mathbb{R}^+)}\leq C\epsilon^{1/2}.
\end{align*}
As $\epsilon$ goes to zero, the asymptotic solution converges to the exact solution in $L^\infty$ norm.

\subsection{Second-type collision term $Q_2$, IBVP I} 
We assume that (i) the initial data for the original relaxation problem are given in equilibrium which means 
$$
U(x,0)=\bar{U}_0(x,0)+\epsilon \bar{U}_1(x,0) 
= \begin{pmatrix}
    \bar{u}_0\\
    \bar{v}_0
\end{pmatrix}(x,0)+\epsilon\begin{pmatrix}
    \bar{u}_1\\
    \bar{v}_1
\end{pmatrix}(x,0)
$$
where $\bar{v}_0(x,0)$ and $\bar{v}_1(x,0)$ satisfy the relation \eqref{IBVP3-v0} and \eqref{IBVP3-v1}.
(ii) The initial data $\bar{u}_{0}(x,0)$ and $\bar{u}_{1}(x,0)$ are smooth enough and compatible with the reduced boundary condition up to a certain order such that $\bar{U}_0\in CH_T^3$ and $\bar{U}_1\in CH_T^2$. (iii) The initial data $U(x,0)=\bar{U}_0(x,0)+\epsilon \bar{U}_1(x,0)$ are compatible with the original boundary condition $B_1U(0,t)=0$ which means 
$
B_1\bar{U}_0(0,0)=B_1\bar{U}_1(0,0)=0.
$

Denote by $\mathcal{L}_2$ the differential operator
\begin{equation*}
\mathcal{L}_2(U):=\partial_tU+A\partial_{x}U-Q_2U/\epsilon.
\end{equation*}
Clearly, the solution to the original relaxation system satisfies $\mathcal{L}_2(U)=0$. Now we substitute the asymptotic solution into the operator.
By \eqref{IBVP3-v0}---\eqref{IBVP3-u1}, we have 
\begin{align}
\mathcal{L}_1(\bar{U}_0+\epsilon \bar{U}_1)=&\begin{pmatrix}
0\\[1mm]
\bar{E}_1
\end{pmatrix},\quad
\bar{E}_1=\epsilon 
(\partial_{t}\bar{v}_{1} +A_{12}^T\partial_x \bar{u}_1+A_{22}\partial_x \bar{v}_1).
\label{residual-c1}
\end{align}
Based on the regularity assumption, we have $\|\bar{E}_1\|_{L^2(\mathbb{R}^+)}+\|\partial_t\bar{E}_1\|_{L^2(\mathbb{R}^+)}\leq C\epsilon$.
Moreover, we use the equations \eqref{IBVP3-vis-1}---\eqref{IBVP3-vis-4} to derive 
\begin{align}
\mathcal{L}_1(\widehat{U}_0+\epsilon \widehat{U}_1)=&\begin{pmatrix}
0\\[1mm]
\widehat{E}_1
\end{pmatrix}+\widehat{E}_2,\quad
\widehat{E}_1=
\sqrt{\epsilon}(A_{12}^T\partial_z\widehat{u}_1+A_{22}\partial_z\widehat{v}_1),\quad
\widehat{E}_2=\epsilon\partial_{t}\widehat{U}_{1}.\label{residual-c2}
\end{align}
Then we have $\mathcal{L}_1(U_\epsilon)=\mathcal{L}_1(\bar{U}_0+\epsilon \bar{U}_1)+\mathcal{L}_1(\widehat{U}_0+\epsilon \widehat{U}_1)$
and find that $U_{err}=U-U_{\epsilon}$ satisfies the equation in \eqref{estimate-eq1} with the residual term $E_1=\bar{E}_1+\widehat{E}_1$ and $E_2=\widehat{E}_2$. 
Based on the property of solutions to the parabolic differential equations, we know that $\partial_z\widehat{U}_1(z,t)$ and $\partial_t\widehat{U}_1(z,t)$ belong to $L^2(\mathbb{R}^+)$ and thereby
$$
\int_{\mathbb{R}^+}|\widehat{E}_1(\frac{x}{\sqrt{\epsilon}})|^2dx \leq \sqrt{\epsilon}\int_{\mathbb{R}^+}|\widehat{E}_1(z)|^2dz \leq C\epsilon^{3/2},
$$
which means that $\|E_1\|_{L^2(\mathbb{R}^+)}\leq C\epsilon^{3/4}$.
Similarly, we get the estimate $\|\partial_tE_1\|_{L^2(\mathbb{R}^+)}\leq C\epsilon^{3/4}$ and $\|E_2\|_{L^2(\mathbb{R}^+)}+\|\partial_tE_2\|_{L^2(\mathbb{R}^+)}\leq C\epsilon^{5/4}$. Therefore, we verify that $E_1$ and $E_2$ satisfy the condition in Lemma \ref{lemma5.1} with $\kappa=1/4$.

Thanks to assumption (iii) and the uniqueness of the solution to the equation \eqref{IBVP3-ReducedBC}, we infer that the boundary layer terms $\widehat{U}_0$ and $\widehat{U}_1$ should be zero at $t=0$. This shows $U_{err}(x,0)=0$. At last, we recall \eqref{IBVP3-BC} to derive $BU_{err}(0,t)=BU(0,t)-BU_{\epsilon}(0,t)=0$. By Lemma \ref{lemma5.1}, we have 
$$
\max_{t\in[0,T]}\|U_{err}(\cdot,t)\|_{H_1(\mathbb{R}^+)}\leq C\epsilon^{1/4}.
$$

\subsection{Second-type collision term $Q_2$, IBVP II} 
We assume that (i) the initial data for the original relaxation problem are given in equilibrium which means 
$$
U(x,0)=\sum_{k=0}^3\epsilon^{k/2} \bar{U}_k(x,0)=\sum_{k=0}^3\epsilon^{k/2} \begin{pmatrix}
   \bar{u}_k\\
   \bar{v}_k
\end{pmatrix}(x,0)
$$
where $\bar{v}_k(x,0)$ satisfies the relation 
\begin{align*}
\bar{v}_0(x,0)=\bar{v}_1(x,0)=0,\quad \bar{v}_2(x,0)=-A_{12}^T\partial_x\bar{u}_0(x,0),\quad
\bar{v}_3(x,0)=-A_{12}^T\partial_x\bar{u}_1(x,0).
\end{align*}
(ii) For $0\leq k\leq 3$, the initial data $\bar{u}_{k}(x,0)$ are smooth enough and compatible with the reduced boundary condition up to a certain order such that $\bar{U}_k\in CH_T^2$ and $\bar{U}_k(0,t)\in H^2(0,T)$ for $0\leq k\leq 3$. (iii) The initial data $U(x,0)$ are compatible with the original boundary condition $B_2U(0,t)=0$ which means $B_2\bar{U}_k(0,0)=0$ for $0\leq k\leq 3$.

By the second assumption and the boundary condition \eqref{eq-problem4-ODE}, we know that the boundary value for $\widehat{U}_k(0,t)$ and $\widetilde{U}_k(0,t)$ are also in $H^2(0,T)$. Based on the regularities of parabolic equation and ordinary differential equations, we have $\widehat{U}_k,\widetilde{U}_k\in H^2([0,T]\times[0,L])$. Besides, with a similar argument as previous sections, assumption (iii) implies that the initial data for $\widehat{U}_k(z,0)$ and $\widetilde{U}_k(y,0)$ are zero.
Thus we know that the error term $U_{err}=U-U_\epsilon$ has zero initial data. Furthermore, it is not difficult to see from \eqref{eq-problem4-ODE} that $U_{err}=U-U_\epsilon$ satisfies the boundary condition $B_2U_{err}(0,t)=B_2U(0,t)-B_2U_\epsilon(0,t)=0$.

Next we compute $\mathcal{L}_2(U_\epsilon)=\mathcal{L}_2(\bar{U})+\mathcal{L}_2(\widehat{U})+\mathcal{L}_2(\widetilde{U})$ with $\bar{U}=\sum_{k}\bar{U}_k$, $\widehat{U}=\sum_{k}\widehat{U}_k$ and $\widetilde{U}=\sum_{k}\widetilde{U}_k$. According to \eqref{outer-neq-p-v} and \eqref{outer-neq-p-u}, the outer solution satisfies 
$$
\mathcal{L}_2(\bar{U})=
\epsilon
\begin{pmatrix}
0\\[1mm]
\partial_t\bar{v}_2+A_{12}^T\partial_x\bar{u}_2+ A_{22}\partial_x\bar{v}_2 
\end{pmatrix}
+\epsilon^{3/2}
\begin{pmatrix}
0\\[1mm]
\partial_t\bar{v}_3+A_{12}^T\partial_x\bar{u}_3+ A_{22}\partial_x\bar{v}_3 
\end{pmatrix}
$$
Thanks to the regularity assumption, we write
\begin{align}
\mathcal{L}_2(\bar{U})=&\begin{pmatrix}
0\\
\bar{E}_1
\end{pmatrix}\qquad
\|\bar{E}_1\|_{L^2(\mathbb{R}^+)}+\|\partial_t\bar{E}_1\|_{L^2(\mathbb{R}^+)}\leq C \epsilon.\label{residual-c2-1}
\end{align}
Moreover, for the viscous boundary layer we see from \eqref{vis-k-v} and \eqref{vis-k-u} that
$$
\mathcal{L}_2(\widehat{U})=\epsilon
\begin{pmatrix}
    0\\[1mm]
    \partial_t\widehat{v}_2+A_{12}^T\partial_z\widehat{u}_3 +A_{22}\partial_z\widehat{v}_3
\end{pmatrix}+\epsilon^{3/2}
\partial_t\begin{pmatrix}
    \widehat{u}_3\\[1mm]
    \widehat{v}_3
\end{pmatrix}:=\begin{pmatrix}
    0\\[1mm]
    \widehat{E}_1
\end{pmatrix}+\widehat{E}_2.
$$
Thanks to the regularity assumption, we have 
$$
\|\widehat{E}_1\|_{L^2(\mathbb{R}^+)}^2
=\int_{\mathbb{R}^+}\left|\widehat{E}_1\left(\frac{x}{\sqrt{\epsilon}},t\right)\right|^2dx = \sqrt{\epsilon} \int_{\mathbb{R}^+}\left|\widehat{E}_1\left(z,t\right)\right|^2dz\leq C \epsilon^{5/2}.
$$
It gives the estimate $\|\widehat{E}_1\|_{L^2(\mathbb{R}^+)}\leq C \epsilon^{5/4}$. With the same argument, we also have $\|\partial_t\widehat{E}_1\|_{L^2(\mathbb{R}^+)}\leq C \epsilon^{5/4}$ and $\|\widehat{E}_2\|_{L^2(\mathbb{R}^+)}+\|\partial_t\widehat{E}_2\|_{L^2(\mathbb{R}^+)}\leq C \epsilon^{7/4}$.
At last, we know from \eqref{Knudsen-k} that 
\begin{align}
\mathcal{L}_2(\widetilde{U})=\epsilon\partial_t\widetilde{U}_2+\epsilon^{3/2}\partial_t\widetilde{U}_3:=\widetilde{E}_2.
\label{residual-c2-3}
\end{align}
By the regularity assumption, we have 
$$
\|\widetilde{E}_2\|_{L^2(\mathbb{R}^+)}^2
=\int_{\mathbb{R}^+}\left|\widetilde{E}_2\left(\frac{x}{\epsilon},t\right)\right|^2dx = \epsilon \int_{\mathbb{R}^+}\left|\widetilde{E}_2\left(y,t\right)\right|^2dy\leq C \epsilon^3.
$$
The estimate $\|\partial_t\widetilde{E}_2\|_{L^2(\mathbb{R}^+)}\leq C\epsilon^{3/2}$ can be obtained similarly. To conclude, we take $E_1=\bar{E}_1+\widehat{E}_1$, $E_2=\widehat{E}_2+\widetilde{E}_2$ and verify the conditions in Lemma \ref{lemma5.1} with $\kappa=1/2$. By exploiting the lemma, we obtain
$$
\max_{t\in[0,T]}\|U_{err}(\cdot,t)\|_{H_1(\mathbb{R}^+)}\leq C\epsilon^{1/2}.
$$

\subsection{Conclusion for the second-type collision term $Q_2$} 
We conclude the result for the interface problem with the second-type collision term. Recall that IBVP I corresponds to the term $U^{(1)}=\sum_{i=1}^nG^{(i)}$ and IBVP II corresponds to the problems for $U^{(k)}=G^{(k)}-G^{(1)}$ with $k\geq 2$. Combining the results above, we obtain
for any $t\in [0,T]$ and $1\leq k\leq n$:
\begin{align*}
\|U^{(k)}(\cdot,t)-U^{(k)}_{\epsilon}(\cdot,t)\|_{L^\infty(\mathbb{R}^+)}\leq C
\|U^{(k)}(\cdot,t)-U^{(k)}_{\epsilon}(\cdot,t)\|_{H_1(\mathbb{R}^+)}\leq C\epsilon^{1/4}.
\end{align*}
Then we define 
\begin{align*}
    G^{(1)}_\epsilon = \frac{1}{n}\Big(U^{(1)}_{\epsilon}-\sum_{k=2}^nU^{(k)}_{\epsilon}\Big),\qquad 
    G^{(i)}_\epsilon = U^{(i)}_\epsilon + G^{(1)}_\epsilon,\quad 2\leq i\leq n.
\end{align*}
and obtain
\begin{align*}
&\|G^{(1)}-G^{(1)}_{\epsilon}\|_{L^\infty(\mathbb{R}^+)} = \frac{1}{n}\|U^{(1)}-U^{(1)}_{\epsilon} - \sum_{k=2}^n\Big(U^{(k)}-U^{(k)}_{\epsilon}\Big)\|_{L^\infty(\mathbb{R}^+)}\leq C\epsilon^{1/4}\\[2mm] 
&\|G^{(i)}-G^{(i)}_{\epsilon}\|_{L^\infty(\mathbb{R}^+)} = \|U^{(i)}-U^{(i)}_{\epsilon} + G^{(1)}-G^{(1)}_{\epsilon} \|_{L^\infty(\mathbb{R}^+)}\leq C\epsilon^{1/4}.
\end{align*}
As $\epsilon$ goes to zero, the asymptotic solution converges to the exact solution in $L^\infty$ norm.

\section{Conclusion}

This paper is a continuation of our preceding work \cite{BDKZ1,BDKZ2} on the derivation
of coupling conditions for macroscopic equations on networks from the underlying kinetic equations and conditions.
The present work rigorously proves the validity of the asymptotic expansions in \cite{BDKZ1,BDKZ2} through an $H^1$ space error estimate. This result establishes the convergence of the discrete kinetic solution towards the macroscopic solution.
Extending the present framework to nonlinear problems remains a challenging topic for future research. Another interesting direction is to extend the proof to cases that include zero kinetic velocity. This issue is related to the theoretical results for general relaxation systems developed in \cite{ZY-chara}, which address the characteristic case. 



\end{document}